\newtheorem{theorem}{Theorem}[section]
\newtheorem{lemma}[theorem]{Lemma}
\newtheorem{proposition}[theorem]{Proposition}
\theoremstyle{definition}
\newtheorem{definition}[theorem]{Definition}
\newtheorem{remark}[theorem]{Remark}
\newcommand\N{\mathbb{N}}
\newcommand\R{\mathbb R}
\newcommand\Z{\mathbb{Z}}
\newcommand\vv{\vee\vee}
\newcommand\V{\mathcal{V}}
\newcommand\Se{\mathcal{S}}
 \newcommand{\ps}[0]{\psi}
 \newcommand{\rh}[0]{\varrho}
 \newcommand{\al}[0]{\alpha}
 \newcommand{\be}[0]{\beta}
 \newcommand{\ga}[0]{\gamma}
 \newcommand{\ta}[0]{\tau}
 \newcommand{\de}[0]{\delta}
 \newcommand{\la}[0]{\lambda}
 \newcommand{\si}[0]{\sigma}
 \newcommand{\Ga}[0]{\Gamma}
 \DeclareMathOperator{\supp}{supp}
 \DeclareMathOperator{\PO}{PO}
\DeclareMathOperator{\QM}{QM}
\begin{document}
\title[STABILITY OF QUADRATIC MODULES]
{STABILITY OF QUADRATIC MODULES}
\author{Tim Netzer}
\address{Universit\"at Konstanz, Fachbereich Mathematik und Statistik, 78457 Konstanz, Germany}
\email{tim.netzer@uni-konstanz.de}
\keywords{Real algebraic geometry; positive polynomials; sum of squares; moment problem.}
\subjclass{12E05, 12Y05, 44A60}
\date{\today}

\begin{abstract}A finitely generated quadratic module or
preordering in the real polynomial ring is called \textit{stable},
if it admits a certain degree bound on the sums of squares in the
representation of polynomials. Stability, first defined explicitly
in \cite{ps}, is a very useful property. It often implies that the
quadratic module is closed; furthermore it helps settling the
Moment Problem, solves the Membership Problem for quadratic
modules and allows applications of methods from optimization to
represent nonnegative polynomials.

We provide sufficient conditions for finitely generated quadratic
modules in  real polynomial rings of several variables to be
stable. These conditions can be checked easily. For a certain
class of semi-algebraic sets, we obtain that the nonexistence of
bounded polynomials implies stability of every corresponding
quadratic module. As stability often implies the non-solvability
of the Moment Problem, this complements the result from
\cite{sch}, which uses bounded polynomials to check the
solvability of the Moment Problem by dimensional induction.
 We also use stability
to generalize a result on the Invariant Moment Problem from
\cite{cks}.
\end{abstract}
\maketitle
\section{Introduction} Preorderings and quadratic modules in the real polynomial ring are of great importance in real algebraic geometry.
 They correspond to semi-algebraic sets in a similar way as ideals correspond to algebraic sets. However, it is much more difficult to deal with preorderings
 and quadratic modules than with ideals in general. Nevertheless, substantial progress has been made in this field in the last fifteen years.
 The basic setup is the following. We take finitely many real polynomials $f_{1},\ldots,f_{s}\in\R[\underline{X}]=\R[X_1,\ldots,X_n]$ and consider the basic closed
 semi-algebraic set $$\Se=\Se(f_{1},\ldots,f_{s}):=\left\{x\in\R^{n}\mid f_{1}(x)\geq 0,\ldots,f_{s}(x)\geq 0\right\},$$ as well as the corresponding preordering
 $$\PO(f_{1},\ldots,f_{s}):=\left\{\sum_{e\in\{0,1\}^{s}}\si_{e}f_{1}^{e_{1}}\cdots f_{s}^{e_{s}}\mid \si_{e}\in\R[\underline{X}]^2\right\}$$ and the smaller quadratic module
  $$\QM(f_1,\ldots,f_s)=\left\{\si_0+\si_1f_1+\cdots+\si_sf_s\mid \si_i\in\sum\R[\underline{X}]^2\right\},$$ where $\sum\R[\underline{X}]^2$
 denotes the set of sums of squares of polynomials. Elements from the preordering are obviously nonnegative as polynomial functions on the semi-algebraic set.
  Now one can ask if the preordering or quadratic module contains \textit{all} such nonnegative polynomials. Although this is not true in general, several Positivstellens\"atze give
  representations of nonnegative polynomials. For example, if the semi-algebraic set is compact, the preordering at least contains all strictly positive polynomials,
   by \cite{sch1}. For quadratic modules or noncompact sets, this result fails in general. See for example \cite{m1,pd} for an extensive exposure of the field.

Another question concerns the Moment Problem. We say that the
preordering/ quadratic module $M$ has the \textit{Strong Moment
Property}, if every linear functional on $\R[\underline{X}]$ which
is nonnegative on $M$ is integration with respect to a measure on
the corresponding semi-algebraic set.  The result from \cite{sch1}
implies that every preordering describing a compact semi-algebraic
set has the Strong Moment Property, and \cite{sch} gives a
criterion for the case of a noncompact set, see also \cite{m1,n}.

If one already knows that a polynomial belongs to the preordering
or quadratic module, it is another problem how to find  an
explicit sums of squares representation. In general, the degree of
the sums of squares used in the representation of some $f$ can not
be bounded by a function that only depends on  the degree of $f$.
For example, in the case of a compact set $\Se,$ one has to take
into account the degree, the size of the coefficients and the
minimum of $f$ on $\Se$, so be able to say something about the
degree of the sums of squares (see \cite[Theorem 8.4.3]{pd} and
\cite{sw2}). This is what makes it so difficult to find
representations.

 Now the notion of \textit{stability} of a finitely generated preordering or quadratic module has first been introduced
explicitly in \cite{ps}.  In the polynomial ring, stability means
that every polynomial in the preordering has a representation,
where the degree of the sums of squares can be bounded by a number
depending only on the degree of the polynomial. The authors of
\cite{ps} give a strong geometric criterion for  quadratic modules
to be stable. Roughly speaking, if the set $\Se$ is big enough at
infinity, then every corresponding finitely generated quadratic
module is stable. The notion has also been dealt with in
\cite{p1,p2}, where the geometric result from \cite{ps} is applied
and extended, for curves and surfaces mostly.

The importance of stability is evident from several results.
First, as shown in \cite{ps}, stable quadratic modules are often
closed (with respect to the finest locally convex topology). This
was also shown in \cite{km}, Theorem 3.5, in the case that $\Se$
contains a full dimensional cone, but without using the notion of
stability explicitly. Similar arguments have been used in
\cite{pd}, Proposition 6.4.5., and \cite{sch0}, Section 11.6.

Second, stability often excludes the Strong Moment Property of
quadratic modules. This useful fact was shown in \cite{s},
generalizing an idea by Prestel and Berg.  The result also shows
that one can often not expect finitely generated quadratic modules
to be stable.

A further reason making stable quadratic modules  so interesting
is that the degree bound condition allows the application of model
theoretic methods. Indeed, the set of all polynomials of fixed
degree which lie in the quadratic module can be defined by a first
order logic formula then. This also solves the so called
\textit{Membership Problem} for stable quadratic modules. Whether
the membership problem is solvable for arbitrary quadratic modules
is still an open question. So far it is only known for finitely
generated preorderings in the real polynomial ring of one
variable, see \cite{au}.

Also the question of finding an explicit representation of a
polynomial in a stable quadratic module is easy to solve. Indeed,
it can be translated into a semi-definite programming problem,
which can be solved efficiently. See \cite{l,sw1,vb} for details.

Our contribution is the following. We define the notion of
\textit{stability with respect to a grading}, for quadratic
modules (see Section \ref{definitions}). This notion of stability
has a characterization which is of purely geometric nature (see
Section \ref{character}). We then relate it to the notion of
stability used in \cite{ps,s}. Indeed, this is the stability one
is mostly interested in.  Our results allow to obtain this
stability in a lot of cases by checking some easy geometric or
combinatorial properties (see Section \ref{sec} and the explicit
examples in Section \ref{examples}).

For a certain class of semi-algebraic sets we are able to proof
that the absence of nontrivial bounded polynomials implies the
stability of every corresponding finitely generated quadratic
module (Theorem \ref{all}). Thus no such quadratic module can have
the Strong Moment Property. This complements the result from
\cite{sch}, that uses bounded polynomials to check the Strong
Moment Property by dimensional induction.

Last, we use the notion of \textit{strong stability} to improve
upon a result from \cite{cks}, while simplifying the proof. This
is done in the last section. \vspace{2ex}

 {\bf Acknowledgements:} I wish to thank  Daniel
Plaumann and Markus Schweighofer  for many interesting and helpful
discussions on the topic. Financial support by the Studienstiftung
des deuschen Volkes is greatfully acknowledged.

\section{Notations and Preliminaries}
For this whole work, let $A=\R[\underline{X}]=\R[X_1,\ldots,X_n]$
be the real polynomial algebra in $n$ variables. A subset
$M\subseteq A$ is called a \textit{quadratic module}, if
$$1\in M,\ M+M\subseteq M \mbox{  and  } A^{2}\cdot M\subseteq M$$ holds, where
$A^{2}$  denotes the set of squares in $A$. For elements
$f_{1},\ldots,f_{s}\in A$, $\QM(f_{1},\ldots,f_{s}),$ defined in
the introduction, is the smallest quadratic module containing
$f_{1},\ldots,f_{s}$. It is called the quadratic module
\textit{generated by} $f_{1},\ldots,f_{s}$. We always assume
 generators of a quadratic module to be all non-zero.
A quadratic module is called a \textit{preordering}, if it is
closed under multiplication, i.e. if $M\cdot M\subseteq M$ holds.
For $f_{1},\ldots,f_{s}\in A$, $\PO(f_{1},\ldots,f_{s})$, again as
in the introduction,
 is the smallest
preordering containing $f_{1},\ldots,f_{s}$. It is called the
preordering \textit{generated by} these elements. For any
quadratic module $M$, $\supp(M):= M\cap -M$ is called the
\textit{support of $M$}. It is an ideal of $A$. For a quadratic
module $M$ in $A$ write
$$\Se(M):=\left\{x\in\R^n\mid f(x)\geq 0 \mbox{ for all } f\in
M\right\}.$$ Of special interest is the case that $M$ is finitely
generated. If $f_1,\ldots,f_s$ are generators of $M$, then
$\Se(M)=\left\{x\in\R^n\mid f_1(x)\geq 0,\ldots,f_s(x)\geq
0\right\}$ is called \textit{basic closed semi-algebraic}. We
include the proof of the following proposition due to the lack of
a good reference.

\begin{proposition}\label{sper}
Let $M$ be a finitely generated quadratic module in $A$. If
$\Se(M)$ is Zariski-dense in $\R^n$, then $\supp(M)=\{0\}$. If $M$
is a finitely generated preordering, then $\supp(M)=\{0\}$ implies
the Zariski-denseness of $\Se(M)$.
\end{proposition}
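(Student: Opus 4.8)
The plan is to prove the first statement directly and to derive the second by contraposition from Stengle's Positivstellensatz. For the first statement --- which in fact needs neither finite generation nor the preordering property --- let $f\in\supp(M)=M\cap -M$. Then $f\in M$ and $-f\in M$, so $f(x)\geq 0$ and $-f(x)\geq 0$ for every $x\in\Se(M)$; that is, $f$ vanishes on $\Se(M)$. A polynomial vanishing on a Zariski-dense subset of $\R^n$ is the zero polynomial, since its zero set is Zariski-closed and therefore contains the Zariski closure of that subset, namely all of $\R^n$. Hence $f=0$, so $\supp(M)=\{0\}$.

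For the second statement, write $M=\PO(f_1,\ldots,f_s)$, so that $\Se(M)=\Se(f_1,\ldots,f_s)$ is basic closed semi-algebraic. I would argue by contraposition: suppose $\Se(M)$ is not Zariski-dense in $\R^n$, so that there is a nonzero $g\in A$ vanishing identically on $\Se(M)$. The ``vanishing'' form of the Positivstellensatz --- available precisely because $M$ is a finitely generated preordering --- then produces an integer $m\geq 1$ with $-g^{2m}\in M$. Since $g^{2m}$ is a square, it lies in $M$ as well, and $g^{2m}\neq 0$ because $A$ is an integral domain; hence $0\neq g^{2m}\in M\cap -M=\supp(M)$, contradicting $\supp(M)=\{0\}$.

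I do not expect a serious obstacle here: the first part is purely formal, and the second is a direct application of a standard Positivstellensatz. The one point worth flagging is where the two hypotheses of the second statement enter: finite generation together with closure under multiplication are used only to invoke the Positivstellensatz in the form ``$g\equiv 0$ on $\Se(f_1,\ldots,f_s)$ implies $-g^{2m}\in\PO(f_1,\ldots,f_s)$ for some $m\geq 1$''. This is exactly why the converse implication must be restricted to preorderings and fails for arbitrary finitely generated quadratic modules.
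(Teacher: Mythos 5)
Your proof is correct and takes essentially the same route as the paper's: the first part is the same formal argument, and for the converse both proofs appeal to Stengle's Positivstellensatz to produce a nonzero element of $\supp(M)$. The only (minor) difference is which form is invoked: the paper applies the nonnegativity form to $f\geq 0$ and to $-f\geq 0$ on $\Se(M)$, obtaining $t_1 f = f^{2e_1}+t_2$ and $\tilde t_1(-f)=f^{2e_2}+\tilde t_2$ with $t_i,\tilde t_i\in M$, and then multiplies to place $t_1\tilde t_1 f$ in $\supp(M)$, whereas you invoke the vanishing (real Nullstellensatz) form directly to get $-g^{2m}\in M$, which is a bit slicker because $g^{2m}$ is visibly a nonzero square and no side case (such as $t_1\tilde t_1=0$) needs attention. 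One microscopic caveat: the standard statement only guarantees $m\geq 0$; but if $m=0$ then $-1\in M$, so $1\in\supp(M)$ and the contradiction is immediate, so nothing is lost.
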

\begin{proof}
Take $f\in\supp(M)$. Then $f=0$ on $\Se(M)$, so $f=0$  by the
Zariski-denseness.

Now suppose $M$ is a finitely generated preordering with
$\supp(M)=\{0\}$. Suppose $f=0$ on $\Se(M)$ for some $f\in A$. By
Theorem 4.2.11 from \cite{pd} there are $t_1,\tilde{t}_1,
t_2,\tilde{t}_2\in M$ and $e_1,e_2\in\N$ such that $t_1f=f^{2e_1}
+t_2$ and $\tilde{t}_1(-f)=f^{2e_2}+\tilde{t}_2$ holds. So
$t_1\tilde{t}_1f\in\supp(M)$, so $f=0$. This shows the desired
denseness.
\end{proof}

Following \cite{s}, for any $\R$-subspace $W$ of $A$ we write
$\sum\left(W;f_1,\ldots,f_s\right)$ for the set of all elements
$$\si_0+\si_1f_1+\cdots + \si_sf_s,$$ where $\si_i\in\sum W^2$ for
all $i$. Obviously each $\sum(W;f_1,\ldots,f_s)$ is contained in
$M=\QM(f_1,\ldots,f_s)$ and $\sum(A;f_1,\ldots,f_s)=M$. If $W$ is
finite dimensional, then $\sum(W;f_1,\ldots,f_s)$ is contained in
a finite dimensional subspace of $A$. The following definition is
Definition 3.2 from \cite{s}:

\begin{definition}\label{staborig}$M=\QM(f_1,\ldots,f_s)$ is called
\textit{stable}, if for every finite dimensional subspace $U$ of
$A$ there is another finite dimensional subspace $W$ of $A$ such
that $$M\cap U\subseteq\sum(W;f_1,\ldots,f_s)$$ holds.
\end{definition}

The following two results show the importance of the notion:

\begin{theorem}[Powers, Scheiderer \cite{ps}]
If $M$ is stable and $\Se(M)$ is Zariski-dense in $\R^n$, then $M$
is closed, i.e. $M=M^{\vv}$ holds, where $M^{\vv}$ denotes the
double dual cone of $M$.
\end{theorem}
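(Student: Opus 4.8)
The plan is to pass to the finest locally convex topology on $A$. For that topology the topological dual is the full linear dual $A^{*}$, and consequently the bipolar theorem gives $C^{\vv}=\overline{C}$ for every convex cone $C\subseteq A$. In particular $M^{\vv}=\overline{M}$, so the assertion $M=M^{\vv}$ is equivalent to $M$ being closed, and this is what I would prove. It is a standard fact that a convex subset of $A$ is closed in the finest locally convex topology if and only if its intersection with every finite-dimensional subspace is closed (in the Euclidean sense). So I would fix a finite-dimensional subspace $U\subseteq A$ and show that $M\cap U$ is closed.

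Stability enters here. By Definition \ref{staborig} there is a finite-dimensional subspace $W\subseteq A$ with $M\cap U\subseteq\sum(W;f_{1},\ldots,f_{s})$, and since $\sum(W;f_{1},\ldots,f_{s})\subseteq M$, this forces $M\cap U=\sum(W;f_{1},\ldots,f_{s})\cap U$. As finite-dimensional subspaces are closed, it now suffices to show that $C:=\sum(W;f_{1},\ldots,f_{s})$ is closed; note that $C$ is contained in the finite-dimensional subspace $V:=W\!\cdot\! W+f_{1}(W\!\cdot\! W)+\cdots+f_{s}(W\!\cdot\! W)$, so the point is that $C$ is closed in $V$.

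To prove this I would exhibit $C$ as a cone over a compact set. Fixing a basis $w_{1},\ldots,w_{d}$ of $W$, every element of $\sum W^{2}$ has the form $\pi(Q):=\sum_{j,j'}Q_{jj'}\,w_{j}w_{j'}$ for a positive semidefinite symmetric matrix $Q$, and every such $Q$ occurs. Let $\Phi$ be the linear map sending a tuple $(Q_{0},\ldots,Q_{s})$ of symmetric matrices to $\pi(Q_{0})+\sum_{i=1}^{s}f_{i}\,\pi(Q_{i})\in V$; then $C=\Phi(\mathcal{C})$, where $\mathcal{C}$ is the cone of tuples of positive semidefinite matrices. Since $\mathcal{C}$ consists of the nonnegative multiples of the compact slice $K_{0}=\{(Q_{i})\in\mathcal{C}:\sum_{i}\operatorname{tr}Q_{i}=1\}$, we get $C=\{t\,v : t\geq 0,\ v\in\Phi(K_{0})\}$ with $\Phi(K_{0})$ compact, and such a set is closed as soon as $0\notin\Phi(K_{0})$. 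It remains to verify the latter, and this is the only point where the hypothesis is used: if $\Phi(Q_{0},\ldots,Q_{s})=0$ with all $Q_{i}$ positive semidefinite, then each of $\pi(Q_{0}),f_{1}\pi(Q_{1}),\ldots,f_{s}\pi(Q_{s})$ lies in $M$ and equals the negative of the sum of the others, hence lies in $\supp(M)=\{0\}$ by Proposition \ref{sper} (using Zariski-density of $\Se(M)$). Thus $\pi(Q_{0})=0$, which forces $Q_{0}=0$ since a sum of squares in $A$ vanishes only if each summand does (evaluate at a point), and $f_{i}\pi(Q_{i})=0$ with $f_{i}\neq 0$ forces $\pi(Q_{i})=0$ because $A$ is a domain, hence $Q_{i}=0$ as well; so $(Q_{i})=0$, contradicting $\sum_{i}\operatorname{tr}Q_{i}=1$.

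I expect the closedness of $C$ to be the only genuine obstacle: a finite sum of closed convex cones — here $\sum W^{2}$ and the $f_{i}\sum W^{2}$ — need not be closed in general, and the whole force of the argument is that the condition $\supp(M)=\{0\}$ rules out exactly the degenerate configurations, so that the compactness argument goes through (the case $s=0$ already recovers the familiar fact that $\sum W^{2}$ is closed). The remaining ingredients — the identity $C^{\vv}=\overline{C}$ for the finest locally convex topology, the reduction to finite-dimensional subspaces, and the bookkeeping with $U$, $W$ and $V$ — are routine.
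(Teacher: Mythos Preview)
The paper does not actually give a proof of this theorem: it is quoted verbatim from \cite{ps} as one of the two results motivating the notion of stability, and no argument is supplied. So there is nothing in the present paper to compare your proposal against.

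That said, your argument is correct and is essentially the standard one (and the one in \cite{ps}). The reduction via the bipolar theorem and the finest locally convex topology to closedness of $M\cap U$ for finite-dimensional $U$ is exactly right; the use of stability to replace $M\cap U$ by $\sum(W;f_1,\ldots,f_s)\cap U$ is the intended step; and the image-of-a-compact-base argument for the closedness of $\sum(W;f_1,\ldots,f_s)$, with the key point $0\notin\Phi(K_0)$ coming from $\supp(M)=\{0\}$ via Proposition~\ref{sper}, is the heart of the matter. Two minor remarks: your parenthetical ``evaluate at a point'' for the implication $\sum p_j^2=0\Rightarrow p_j=0$ really means ``evaluate at every point of $\R^n$'', which is fine; and in the step $f_i\pi(Q_i)=0\Rightarrow\pi(Q_i)=0$ you are tacitly using that $A$ is an integral domain and that the generators $f_i$ are nonzero, both of which are standing assumptions in the paper.
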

\begin{theorem}[Scheiderer \cite{s}] If $M$ is stable and $\Se(M)\subseteq\R^n$
has dimension at least two, then $M$ does not have the Strong
Moment Property. In particular, $M$ does not contain all
polynomials that are nonnegative on $\Se(M)$.
\end{theorem}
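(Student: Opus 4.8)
The plan is to argue the contrapositive: assuming $M$ stable with $\dim\Se(M)\geq 2$, I would produce a linear functional on $A$ that is nonnegative on $M$ but is not integration against any measure supported on $\Se(M)$. I may first assume $\Se(M)$ is Zariski-dense in $\R^n$ --- otherwise I pass to the coordinate ring of the Zariski closure of $\Se(M)$, an affine variety of dimension $\geq 2$ in which the images of $f_1,\dots,f_s$ generate a still-stable quadratic module and in which a representing measure for $M$ pushes forward from one for the image. Then $\supp(M)=\{0\}$ by Proposition~\ref{sper}, and $M$ is closed, $M=M^{\vv}$, by the theorem of Powers and Scheiderer quoted above. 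Now suppose for contradiction that $M$ has the Strong Moment Property, and let $\mathcal M$ be the cone of moment functionals supported on $\Se(M)$. Since every element of $M$ is nonnegative on $\Se(M)$ one always has $\mathcal M\subseteq M^\vee$, while SMP is exactly the reverse inclusion; so $M^\vee=\mathcal M$. Dualising, using $M=M^{\vv}$ and the elementary identity $\mathcal M^\vee=\Pos(\Se(M))$ (test against Dirac measures), I get $M=\Pos(\Se(M))$. Thus it suffices to show: no finitely generated stable quadratic module can equal $\Pos(\mathcal S)$ for a semi-algebraic set $\mathcal S$ with $\dim\mathcal S\geq 2$.

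For this I would make stability explicit. Filtering $A$ by the finite-dimensional spaces $A_d$ of polynomials of degree $\leq d$, stability of $\QM(f_1,\dots,f_s)=\Pos(\mathcal S)$ yields a function $k$ with $\Pos(\mathcal S)\cap A_d\subseteq\sum(A_{k(d)};f_1,\dots,f_s)$; the right-hand cone is finitely generated inside the finite-dimensional space $A_{k(d)+m}$, $m=\max_i\deg f_i$, hence closed, so every polynomial of degree $\leq d$ nonnegative on $\mathcal S$ has a weighted sums-of-squares representation with all sums of squares of degree $\leq k(d)$. To contradict this I would choose, after an affine coordinate change, a coordinate projection $\pi\colon\R^n\to\R^2$ with $\pi(\mathcal S)$ Zariski-dense in $\R^2$, so $\pi^*\colon\R[Y_1,Y_2]\hookrightarrow A$ is injective, and transfer back a family of Motzkin-type forms on $\R^2$: nonnegative polynomials $q_d$ of controlled degree whose $\QM$-representations force sums of squares of degree growing without bound, because such a representation, restricted along the curves where the weights $\pi(f_i)$ are tame, would descend to a bounded-degree sums-of-squares representation of $q_d$ on $\R^2$, contradicting the classical non-existence of degree bounds attached to $\Sigma\R[Y_1,Y_2]^2\subsetneq\Pos(\R^2)$ (the Motzkin / Berg--Christensen--Jensen circle of ideas). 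The pullbacks $\pi^*q_d$ then violate the bound $k$, contradicting stability.

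The hard part is this transfer step: one must rule out that the weights $f_1,\dots,f_s$ supply short representations of the $q_d$ with no analogue on $\R^2$, and this is exactly where stability is indispensable --- it converts the question into a sequence of finite-dimensional, hence closed, cone problems whose degree bound, if it existed, would survive restriction to the two-dimensional slice, where it is known to fail; the dimension hypothesis is sharp, since on a one-dimensional slice Hamburger's theorem makes every positive semidefinite functional a moment functional, so no contradiction could be extracted there. Equivalently, one can run everything on the dual side: build the desired functional $L\in M^\vee\setminus\mathcal M$ inductively over the $A_d$, extending at each stage a Berg--Christensen--Jensen non-moment functional defined on $\pi^*\R[Y_1,Y_2]$ to all of $A_d$ within the full closed cone $(\Pos(\mathcal S)\cap A_d)^\vee$ --- and there the obstacle is arranging these extensions to be compatible across all $d$, which again rests on the finite-dimensional closedness coming from stability.
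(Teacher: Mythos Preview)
The paper does not give its own proof of this theorem: it is stated as a result of Scheiderer, with the citation \cite{s}, and no argument follows. So there is no in-paper proof to compare your proposal against.

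That said, your outline is broadly in the spirit of Scheiderer's actual argument in \cite{s}. Your reduction---using Powers--Scheiderer closedness plus the duality $\mathcal{M}^\vee=\Pos(\Se(M))$ to convert ``stable $+$ SMP'' into ``stable $+$ saturated'', and then arguing that no stable finitely generated quadratic module can be all of $\Pos(\mathcal{S})$ in dimension at least two---is correct and is essentially how the contrapositive is organised there as well. The real content, as you yourself flag, is the transfer step: producing a family of nonnegative polynomials whose weighted sums-of-squares representations require unbounded degree, even in the presence of the extra generators $f_1,\dots,f_s$. Scheiderer's paper does this explicitly by constructing such families and analysing their representations; your sketch gestures at a Motzkin/Berg--Christensen--Jensen mechanism and a projection to $\R^2$, but does not actually show that a hypothetical degree bound in $A$ would descend to one on the two-dimensional slice. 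The difficulty you name---that the weights $f_i$ might furnish short representations with no planar analogue---is exactly the point that needs a real argument, and your proposal stops short of supplying one. As written, then, this is a plausible plan rather than a proof; to complete it you would need to either carry out Scheiderer's explicit construction or make the restriction-to-a-slice argument rigorous.
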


For our approach towards stability, we need the notions of
filtrations and gradings. So let $(\Gamma,\leq)$ be an ordered
Abelian group, i.e. an Abelian group $\Gamma$ with a linear
ordering, such that $\alpha\leq\beta \Rightarrow \alpha + \gamma
\leq\beta+\gamma$ holds for any $\alpha,\beta,\gamma\in\Gamma$.

\begin{definition}
A \textit{filtration} of $A$ is a family
$\{U_{\gamma}\}_{\gamma\in\Gamma}$ of linear $\R$-subspaces of
$A$, such that for all $\ga,\ga'\in\Ga$
$$\gamma\leq\gamma' \Rightarrow U_{\gamma}\subseteq U_{\gamma'},$$
$$U_{\ga}\cdot U_{\ga'}\subseteq U_{\ga+\ga'},$$
$$\bigcup_{\ga\in\Ga}U_{\ga} = A \mbox{ and }$$ $$1\in U_0$$  holds.
\end{definition}

\begin{definition}
A \textit{grading} of $A$ is a decomposition of the $\R$-vector
space $A$ into a direct sum of linear subspaces:
$$A=\bigoplus_{\ga\in\Ga}A_{\ga},$$ such that $A_{\ga}\cdot
A_{\ga'}\subseteq A_{\ga+\ga'}$ holds for all $\ga,\ga'\in\Ga$.
\end{definition}Any element $0\neq f\in A$ can then be written in a unique way as
$$f=f_{\ga_{1}}+\cdots + f_{\ga_{d}}$$ for some $d\in\N$ and $0\neq
f_{\ga_{i}}\in A_{\ga_{i}}$, where $\ga_{1} < \ga_{2} < \cdots <
\ga_{d}$. Then $\deg(f):=\ga_{d}$ is called the \textit{degree of
$f$}, and $f^{\max}:=f_{\ga_{d}}$ is called the \textit{highest
degree part of $f$}. Elements from $A_{\ga}$ are called
\textit{homogeneous of degree $\ga$}. The degree of $0$ is
$-\infty$. One easily checks that $1\in A_0.$

The following are some easy observations: If
$A=\bigoplus_{\ga\in\Ga}A_{\ga}$ is a grading, then
$$U_{\tau}:= \bigoplus_{\ga\leq\tau}A_{\ga}$$ defines a filtration
$\{U_{\tau}\}_{\tau\in\Ga}$ of $A$. If  $\nu\colon K \rightarrow
\Ga\cup\{\infty\}$ is a valuation of the quotient field
$K=\R(X_1,\ldots,X_n)$ of $A$ which is trivial on $\R$, then
$$U_{\ga}:=\{ f\in A\mid \nu(f)\geq -\ga\}$$ defines a filtration
$\{U_{\ga}\}_{\ga\in\Ga}$ of $A$. If
$A=\bigoplus_{\ga\in\Ga}A_{\ga}$ is a grading, then
$$\nu\left(\frac{f}{g}\right):= \deg(g)-\deg(f)$$ defines a valuation on the
quotient field $K$, trivial on $\R$. This valuation induces the
same filtration on $A$ as the grading. For any grading and all
$f,g\in A$ we have $\deg(f\cdot g)=\deg(f)+\deg(g)$  and
$$\deg(f^2+g^2)=\max\{ \deg(f^2),\deg(g^2)\}
=2\max\{\deg(f),\deg(g)\}.$$

\section{Definitions of Stability}\label{definitions}

\begin{definition}\label{stabdef} Let $\{U_{\gamma}\}_{\ga\in\Ga}$ be a
filtration of $A$ and $f_{1},\ldots,f_{s}$ generators of the
quadratic module $M$. We set $f_{0}=1$.

(1) $f_{1},\ldots,f_{s}$ are called \textit{stable generators of
$M$ with respect to the filtration}, if there is a monotonically
increasing map  $\rh\colon \Ga \rightarrow\Ga$, such that
$$M\cap U_{\ga}\subseteq
\sum\left(U_{\rh(\ga)};f_1,\ldots,f_s\right)$$ holds for all
$\ga\in\Ga$.

(2) $f_{1},\ldots,f_{s}$ are called \textit{strongly stable
generators of $M$ with respect to the filtration}, if there is a
monotonically increasing map $\rh\colon \Ga \rightarrow\Ga$, such
that for all sums of squares $\si_{0},\ldots,\si_{s}$, where
$\si_{i}=g_{i,1}^{2}+\cdots+g_{i,k_{i}}^{2}$, we have
$$\sum_{i=0}^{s}\si_{i}f_{i}\in U_{\ga}\Rightarrow g_{i,j}\in U_{\rh(\ga)} \mbox{ for all }i,j.$$
\end{definition}

Obviously, strongly stable generators of $M$ are stable generators
of $M$.  The notion of strong stability has also been introduced
in \cite{p1}, but under a different name. The following Lemma is
essentially the same as \cite{ps}, Lemma 2.9.
\begin{lemma}\label{ind} If $M$ has stable generators with respect to a given filtration, then any finitely many generators of $M$ are
stable generators with respect to that filtration.
\end{lemma}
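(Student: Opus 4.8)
The plan is to compare two finite sets of generators directly, reducing the general case to adding or removing a single generator. So let $f_1,\dots,f_s$ be stable generators of $M$ with respect to the filtration $\{U_\gamma\}$, witnessed by a monotone map $\rho$, and let $g_1,\dots,g_t$ be any other finite set of generators of $M$. Since $M=\QM(f_1,\dots,f_s)=\QM(g_1,\dots,g_t)$, each $g_k$ lies in $\QM(f_1,\dots,f_s)$, hence in some $\sum(U_{\delta_k};f_1,\dots,f_s)$; symmetrically each $f_i$ lies in some $\sum(U_{\delta_i'};g_1,\dots,g_t)$. Fix $\delta\in\Gamma$ with $U_\delta$ large enough to contain all the pieces appearing in these finitely many fixed representations of the $g_k$ in terms of the $f_i$.

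First I would prove the "backward" inclusion: that being able to represent an element via $f_1,\dots,f_s$ with a degree bound gives a degree bound for representing it via $g_1,\dots,g_t$, provided also the $f_i$ are expressible through the $g_k$ with bounded degree. The key computation is the routine but central one: if $h\in M\cap U_\gamma$, then by stability of the $f_i$ we have $h=\sum_{i=0}^s\sigma_i f_i$ with all the square summands $g_{i,j}$ (writing $\sigma_i=\sum_j g_{i,j}^2$) lying in $U_{\rho(\gamma)}$; and substituting for each $f_i$ its fixed representation $\sum_{l} \tau_{i,l}g_l$ (with $g_0=1$) whose square pieces lie in a fixed $U_{\delta'}$, one expands and regroups. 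Using $U_\alpha\cdot U_\beta\subseteq U_{\alpha+\beta}$ and $\deg(\sigma_i\tau_{i,l})$ bounds, every sum-of-squares coefficient that results is a sum of products of two elements, each lying in $U_{\rho(\gamma)+\delta'}$ or so; hence the square summands lie in $U_{\rho(\gamma)+\delta'}$ after absorbing the constant shift. One has to be slightly careful that a product $\sigma_i\cdot\tau_{i,l}$ is not itself a sum of squares, but a sum of squares times a sum of squares is again a sum of squares (via the identity $(\sum a_j^2)(\sum b_k^2)=\sum_{j,k}(a_jb_k)^2$), and each $a_jb_k$ lies in the product filtration piece. This yields a monotone bound $\gamma\mapsto\rho(\gamma)+\delta'$ for the $g$-representation, so $g_1,\dots,g_t$ are stable generators.

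The main obstacle — really the only subtle point — is bookkeeping the degree shifts so that the resulting map $\Gamma\to\Gamma$ is genuinely monotone and well-defined on all of $\Gamma$; this is where one uses that $\Gamma$ is an ordered abelian group, so that $\gamma\mapsto\rho(\gamma)+c$ is monotone for any fixed $c$, and that the finitely many representations of the $g_k$ in terms of the $f_i$ can be uniformly dominated by a single $U_\delta$ because there are only finitely many of them. I would remark that this is exactly the argument of \cite{ps}, Lemma 2.9, adapted from the polynomial degree filtration to an arbitrary filtration indexed by an ordered abelian group; the only change is replacing "$\max$ of finitely many integers" by "any common upper bound in $\Gamma$", which exists since $\Gamma$ is linearly ordered. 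No new ideas are needed beyond the distributive expansion and the product formula for sums of squares.
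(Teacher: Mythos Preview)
Your argument is correct and is essentially the same as the paper's: express each $f_i$ as $\sum_l \tau_{i,l} g_l$ with all square pieces in a fixed $U_{\delta'}$, then for $h\in M\cap U_\gamma$ substitute into the stable representation $h=\sum_i\sigma_i f_i$ and regroup, using the product-of-sums-of-squares identity to conclude that the new coefficients lie in $\sum(U_{\rho(\gamma)+\delta'})^2$. Aside from some minor notational noise (the initial remark about reducing to one generator and the bound $\delta$ on the $g_k$-in-terms-of-$f_i$ representations are never actually used; also your symbols $g_{i,j}$ for square summands clash with the generators $g_l$), your proof is the same as the paper's.
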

\begin{proof}Suppose $f_{1},\ldots,f_{s}$ are stable generators of $M$ with stability map $\rh$ as in Definition
\ref{stabdef}(1). Let $g_{1},\ldots,g_{t}$ be arbitrary generators
of $M$. Then we find representations $$f_{i}= \sum_{j=0}^{t}
\si_{j}^{(i)}g_{j},$$ where all $\si_{j}^{(i)}\in\sum
\left(U_{\ta}\right)^{2}$ for some big enough $\ta\in\Ga$. Now
take $f\in M\cap U_{\ga}$ for some $\ga$ and find a representation
$f=\sum_{i=0}^{s}\si_{i}f_{i}$ with $\si_{i}\in\sum \left(
U_{\rh(\ga)}\right)^{2}$ for all $i$. Then
$$f=\sum_{i}\si_{i}f_{i}
=\sum_{i}\si_{i}\sum_{j}\si_{j}^{(i)}g_{j} = \sum_{j}
\left(\sum_{i}\si_{i}\si_{j}^{(i)}\right)g_{j},$$ and all
$\sum_{i}\si_{i}\si_{j}^{(i)}$ are in
$\sum\left(U_{\rh(\ga)+\ta}\right)^{2}$. This shows that
$g_{1},\ldots,g_{t}$ are also stable generators of $M$, with
 stability map $\ga\mapsto \rh(\ga)
+\tau$.
\end{proof}
So it makes sense to talk about stability of a finitely generated
quadratic module with respect to a filtration, without mentioning
the generators. However, the stability map $\rh$ may depend on the
generators in general.

Note that $M$ is stable in the usual sense (defined in the
previous section), if and only if it is stable with respect to a
filtration consisting of finite dimensional subspaces $U_{\ga}$ of
$A$.

Now suppose we are given a grading on $A$.  We will talk about
stable generators, strongly stable generators and stable quadratic
modules \textit{with respect to the grading}, and always mean
these notions with respect to the induced filtration. However,
things become easier to handle in this case.

\begin{lemma}\label{unab}  Let $A=\bigoplus_{\ga\in\Ga}A_{\ga}$ be
a grading and let $M$ be a finitely generated quadratic module in
$A$. Then $M$ has strongly stable generators with respect to the
grading if and only if there is a monotonically increasing map
$\ps\colon\Ga\rightarrow\Ga$, such that for all $f,g\in M$
$$\deg(f)\leq\ps\left(\deg(f+g)\right)$$ holds. In particular, if $M$ has strongly stable generators, then any finitely many generators are strongly stable generators.\end{lemma}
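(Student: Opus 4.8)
The plan is to prove the equivalence in two directions, then derive the "in particular" clause.

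For the forward direction, suppose $f_1,\ldots,f_s$ are strongly stable generators of $M$ with respect to the grading, with stability map $\rh$. Take $f,g\in M$ and set $h=f+g$, say $h\in U_\ga$ with $\ga=\deg(h)$ (using that for the filtration induced by a grading, $\deg(h)$ is exactly the least $\ga$ with $h\in U_\ga$). Write $h=\sum_{i=0}^s\si_if_i$ with $\si_i=\sum_j g_{i,j}^2$; by strong stability every $g_{i,j}\in U_{\rh(\ga)}$, hence $\deg(\si_if_i)=2\max_j\deg(g_{i,j})+\deg(f_i)\leq 2\rh(\ga)+\deg(f_i)$. Now the key point is that $f$ itself is a partial sub-sum of the terms $\si_if_i$ — but that is not literally true, since $f$ need not be one of the summands. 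Instead I would argue: since $f\in M$, also $f$ has such a representation, but we need the degree bound for \emph{this particular} $f$ coming from the pair $(f,g)$. The cleaner route is to observe that it suffices to bound $\deg(f)$ by something depending only on $\deg(f+g)$; so apply strong stability directly to $f+g$ to bound the $g_{i,j}$, giving $\deg\!\big(\sum_i\si_if_i\big)\le 2\rh(\ga)+\max_i\deg(f_i)=:\ps(\ga)$, and note this sum \emph{equals} $f+g$, not $f$. To get a bound on $\deg(f)$ I would instead also bound $g$: write $g$ with its own representation — no, this circularity means the right formulation is simply to use that the representation of $h=f+g$ is a sum of $\si_if_i$'s of controlled degree, and then \emph{not} split off $f$ but rather reformulate the target: what Lemma~\ref{unab} really encodes is that in $M$, degree cannot drop arbitrarily under addition, and strong stability says precisely each square piece has bounded degree; so given $f,g\in M$, apply strong stability to $f$ alone is wrong — apply it to $f+g$, getting each $g_{i,j}\in U_{\rh(\ga)}$, so $f+g\in\sum(U_{\rh(\ga)};f_1,\ldots,f_s)$, and now $f=(f+g)-g$; this doesn't immediately bound $\deg f$. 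The correct observation: set $\ps(\ga)=2\rh(\ga)+\max_i\deg(f_i)$, and check that $\deg(f+g)=\ga$ forces $\deg(f)\le\ps(\ga)$ because... here is where I would use that the grading is by an ordered group, so if $\deg(f)>\ps(\ga)$ were large, then since $g=(f+g)-f$, $\deg(g)=\deg(f)$ too, and both $f,g$ lie in $M\cap U_{\deg(f)}$; writing $f+g\in M\cap U_\ga$ via strong stability bounds the $g_{i,j}$ by $\rh(\ga)$, but this gives $\deg(f+g)$ control, not $\deg f$. I will resolve this by instead applying strong stability to the element $f\in M$ itself is not available since we only know $f+g\in U_\ga$; the honest move is: take a representation $f=\sum\si_i^{(1)}f_i$ and $g=\sum\si_i^{(2)}f_i$; then $f+g=\sum(\si_i^{(1)}+\si_i^{(2)})f_i$, each $\si_i^{(1)}+\si_i^{(2)}$ a sum of squares, so strong stability applied to $f+g\in U_\ga$ bounds \emph{all} the squares from both $f$ and $g$ by $\rh(\ga)$; in particular all squares in the representation of $f$ are in $U_{\rh(\ga)}$, whence $\deg(f)=\deg\!\big(\sum_i\si_i^{(1)}f_i\big)\le 2\rh(\ga)+\max_i\deg(f_i)=:\ps(\ga)$, and $\ps$ is monotonically increasing since $\rh$ is. This is the argument; the subtlety is choosing to combine the two representations before invoking strong stability.

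For the reverse direction, suppose $\ps\colon\Ga\to\Ga$ monotone satisfies $\deg(f)\le\ps(\deg(f+g))$ for all $f,g\in M$. Fix generators $f_1,\ldots,f_s$, $f_0=1$. Let $\sum_{i=0}^s\si_if_i\in U_\ga$, with $\si_i=\sum_j g_{i,j}^2$. I want to bound each $\deg(g_{i,j})$ by $\rh(\ga)$ for suitable monotone $\rh$. The element $\si_if_i=(\si_if_i+\sum_{k\ne i}\si_kf_k)-\sum_{k\ne i}\si_kf_k=(f+g)$-form with $f=\si_if_i\in M$ and $g=\sum_{k\ne i}\si_kf_k\in M$; hence $\deg(\si_if_i)\le\ps(\deg(\sum_k\si_kf_k))=\ps(\deg(\text{element in }U_\ga))\le\ps(\ga)$, using monotonicity of $\ps$ and that $\deg$ of an element of $U_\ga$ is $\le\ga$. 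Thus $2\max_j\deg(g_{i,j})+\deg(f_i)=\deg(\si_if_i)\le\ps(\ga)$, so $\deg(g_{i,j})\le\tfrac12(\ps(\ga)-\deg(f_i))$; taking $\rh(\ga):=\max_i\big\lceil\tfrac12(\ps(\ga)-\deg(f_i))\big\rceil$ (or the appropriate element of $\Ga$ dominating these finitely many values — here one uses that $\Ga$ is an ordered group so division by $2$ and maxima make sense, or one simply picks any $\rh(\ga)$ above all of them) gives a monotone map witnessing strong stability. Hence $f_1,\ldots,f_s$ are strongly stable generators.

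The "in particular" clause is then immediate: the characterizing condition $\deg(f)\le\ps(\deg(f+g))$ makes no reference to the chosen generators, so if one set of finitely many generators is strongly stable then the condition holds, and by the reverse direction \emph{every} set of finitely many generators is strongly stable. The main obstacle is the bookkeeping in the forward direction — getting the bound on $\deg(f)$ rather than on $\deg(f+g)$ — which is handled by combining the representations of $f$ and $g$ into a single representation of $f+g$ before applying strong stability; a secondary minor point is phrasing the "divide by $2$ and take a finite max" step correctly inside the ordered group $\Ga$, which is routine since $\Ga$ is abelian and linearly ordered.
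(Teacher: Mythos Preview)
Your argument is correct and follows essentially the same route as the paper's proof: in the forward direction you combine representations of $f$ and $g$ into one representation of $f+g$ and apply strong stability to bound the individual squares, giving $\ps(\ga)=2\rh(\ga)+\max_i\deg(f_i)$; in the reverse direction you split $\sum_i\si_if_i$ as $\si_jf_j + \sum_{k\ne j}\si_kf_k$ and apply the hypothesis. The ``in particular'' clause is derived identically.

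One small point worth tightening: your claim that ``$\Ga$ is an ordered group so division by $2$ and maxima make sense'' is not right---$\Ga=\Z$ is already a counterexample---and your fallback ``pick any $\rh(\ga)$ above all of them'' refers to elements $\tfrac12(\ps(\ga)-\deg(f_i))$ that may not exist in $\Ga$. The paper avoids this cleanly: from $2\deg(p_{j,l})\le\ps(\ga)-\min_i\deg(f_i)$ one simply sets $\rh(\ga):=\max\{0,\ps(\ga)-\min_i\deg(f_i)\}$ and uses that $a\le 2a$ whenever $a\ge 0$ in an ordered group (and $a\le 0\le\rh(\ga)$ otherwise). This is the only place your write-up needs a correction; the rest is exactly the paper's argument, though you could prune the exploratory false starts in the forward direction.
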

\begin{proof}
Suppose $f_{1},\ldots,f_{s}$ are strongly stable generators of $M$
with stability map $\rh$. Take $f,g$ from $M$ with representations
$f=\sum_{i}\si_{i}f_{i}, g=\sum_{i}\ta_{i}f_{i}$. Then for all
 $j$ \begin{align*}\deg\left(\si_{j}f_{j}\right)&= \deg\left(\si_{j}\right)+\deg\left(f_{j}\right) \\ &\leq \deg\left(\si_{j}+\ta_{j}\right)+\deg(f_{j})\\
&=\deg(\left(\si_{j}+\ta_{j}\right)f_{j})\\ &\leq \ps\left(\deg\left(\sum_{i}\left(\si_{i}+\ta_{i}\right)f_{i}\right)\right)\\
&= \ps\left(\deg\left(f+g\right)\right),\end{align*} where the
last inequality is fulfilled with
$$\ps(\ga):=2\rh(\ga)+\max_{i}\deg(f_{i}),$$ by the strong stability
of the $f_{i}$. So $\deg(f)\leq \ps\left(\deg(f+g)\right)$ holds.
Note that $\ps$ is monotonically increasing, as $\rh$ was.

So now suppose $\deg(f),\deg(g)\leq\ps\left(\deg(f+g)\right)$ for
 some suitable map $\ps$ and all $f,g\in M$. Take any finitely many
(non-zero) generators $f_{1},\ldots,f_{s}$ and sums of squares
$\si_{0},\ldots,\si_{s}$, where $\si_{j}=p_{j,1}^{2}+\cdots
+p_{j,k_{j}}^{2}$. Set $f_{0}=1$. Then
$$\deg\left(\si_{j}f_{j}\right) \leq
\ps\left(\deg\left(\sum_{i}\si_{i}f_{i}\right)\right)$$ for all
$j$. Thus for all $j,l$,
$$2\deg(p_{j,l})\leq\ps\left(\deg\left(\sum_{i}\si_{i}f_{i}\right)\right)-\min_{i}\deg(f_{i}).$$
So $$\deg\left(p_{j,l}\right)\leq \max\left\{ 0,\ps\left(\deg
\left(\sum_{i}\si_{i}
f_{i}\right)\right)-\min_{i}\deg(f_{i})\right\}$$ holds. Now
$\rh(\ta):=\max\left\{
0,\ps\left(\ta\right)-\min_{i}\deg(f_{i})\right\}$ defines a
monotonically increasing map, and whenever $$f=\sum_i \si_if_i\in
\bigoplus_{\ga\leq\ta}A_{\ga} \mbox{ for some }\ta,$$ then
$\deg(p_{j,l})\leq\rh(\deg(f))\leq\rh(\ta),$ which shows the
strong stability of the $f_{1},\ldots,f_{s}$. We have used the
fact that $\rh$ is monotonically increasing in the last
inequality.

The proof shows that any finitely many generators are strongly
stable generators in this case.
\end{proof}So we can talk about  strong stability of a finitely generated quadratic module with
respect to a grading, without mentioning the generators. A very
special case of strong stability is the following, which will have
a nice characterization below.
\begin{definition}
Let $A=\bigoplus_{\ga\in\Ga}A_{\ga}$ be a grading and let
$M\subseteq A$ be a finitely generated quadratic module. $M$ is
\textit{totally stable with respect to the grading}, if
$$\deg(f) \leq\deg(f+g)$$ holds for all $f,g\in M$.
The proof of Lemma \ref{unab} shows that this is equivalent to the
fact that there are generators $f_1,\ldots, f_s$ of $M$ such that
$$\deg(\si_jf_j)\leq \deg\left(\sum_{i}\si_if_i\right) $$
holds for all $\si_j\in\sum A^2.$ Any finite set of generators of
$M$ fulfills this condition then.
\end{definition}

Note that a quadratic module $M$ in $A$ which is totally stable
with respect to a grading has trivial support. Indeed if $f,-f\in
M$, then $\deg(f)\leq \deg(f-f)=\deg(0)=-\infty,$ so $f=0$.

If $\nu\colon K\rightarrow\Ga\cup\{\infty\}$ is the valuation
corresponding to a given grading, then the notion of total
stability is equivalent to saying that for any $f,g\in M$,
$$\nu(f+g)=\min\left\{ \nu(f),\nu(g)\right\}$$ holds. This is
usually called \textit{weak compatibility of $\nu$ and $M$}.

\section{Characterizations of Stability}\label{character}

Total stability with respect to a grading turns out to be well
accessible. First, when checking total stability of a finitely
generated quadratic module, one can apply an easy reduction
result, to obtain possibly smaller quadratic modules. Therefore
take generators $f_1,\ldots,f_s$ of $M$, define an equivalence
relation on the generators by saying
$$f_i\equiv f_j :\Leftrightarrow \deg(f_i)\equiv \deg(f_j) \mod
2\Ga,$$ and group them into equivalence classes
$$\left\{f_{i1},\ldots,f_{is_i} \right\}\ (i=1,\ldots,r).$$ Then
total stability reduces to total stability of the quadratic
modules generated by these equivalence classes:

\begin{proposition}\label{modtwo}
$M$ is totally stable with respect to the given grading if and
only if all the quadratic modules
$$M_i:=\QM(f_{i1},\ldots,f_{is_i})$$ are totally stable.
\end{proposition}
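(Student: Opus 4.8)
The plan is to prove both implications directly from the definition of total stability, i.e. from the inequality $\deg(f)\leq\deg(f+g)$ for all $f,g\in M$, using the key algebraic fact recorded in the preliminaries: for a grading one has $\deg(pq)=\deg(p)+\deg(q)$ and $\deg(p^2+q^2)=2\max\{\deg p,\deg q\}$, so that for any sum of squares $\sigma=\sum_l p_l^2$ one has $\deg(\sigma)=2\max_l\deg(p_l)$, an even element of $\Ga$.

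The forward implication is the easy one. Suppose $M$ is totally stable. Each $M_i$ is generated by a subset of the generators of $M$, so $M_i\subseteq M$, and total stability is inherited by any sub-quadratic-module whose elements lie in $M$: if $f,g\in M_i$ then $f,g\in M$, hence $\deg(f)\leq\deg(f+g)$. So all $M_i$ are totally stable.

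For the converse, assume all $M_i$ are totally stable. Take an arbitrary element $f=\sum_{i}\sum_{j}\sigma_{ij}f_{ij}\in M$ (writing $f_{01}=1$ as the extra generator in, say, the class $i=1$, or handling the constant term separately), where $\sigma_{ij}\in\sum A^2$. For fixed $i$, every term $\sigma_{ij}f_{ij}$ has degree $\deg(\sigma_{ij})+\deg(f_{ij})$, and since all $\deg(f_{ij})$ for $j=1,\dots,s_i$ are congruent mod $2\Ga$ while $\deg(\sigma_{ij})\in 2\Ga$, all the terms within the $i$-th block have degree in one fixed coset $c_i+2\Ga$, and these cosets $c_1,\dots,c_r$ are pairwise distinct. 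The crucial consequence is that there can be no cancellation of highest-degree parts between different blocks: if $h_i:=\sum_j\sigma_{ij}f_{ij}\in M_i$ denotes the $i$-th block, then $\deg(h_i)\in c_i+2\Ga$, so the $\deg(h_i)$ are pairwise distinct (they lie in distinct cosets), whence $\deg(f)=\deg(\sum_i h_i)=\max_i\deg(h_i)$. Now apply total stability of each $M_i$: within block $i$ we get $\deg(\sigma_{ij}f_{ij})\leq\deg(h_i)\leq\deg(f)$ for all $j$. Using the equivalent "generator-wise" form of total stability from the Definition, this is exactly what is needed: for every block-generator $f_{ij}$ and every $\sigma_{ij}\in\sum A^2$, $\deg(\sigma_{ij}f_{ij})\leq\deg(\sum_{i,j}\sigma_{ij}f_{ij})$, and by that same Definition this implies $M$ is totally stable.

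The one point that needs care — and the main (mild) obstacle — is the degree-parity bookkeeping: one must check that $\deg(h_i)$ genuinely lands in the coset $c_i+2\Ga$ (this uses that a sum of elements all of whose degrees lie in $c_i+2\Ga$ again has degree in $c_i+2\Ga$, which follows because the highest-degree part is a sum of homogeneous pieces of degrees in that coset and is nonzero unless $h_i=0$), and that the distinctness of the cosets $c_i$ forces $\deg(\sum_i h_i)=\max_i\deg(h_i)$ with no cancellation. The treatment of the constant generator $f_0=1$ (degree $0\in 2\Ga$) and of a zero block ($h_i=0$, degree $-\infty$, causing no trouble) should be mentioned explicitly. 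Everything else is a routine combination of the degree formulas with the hypothesis.
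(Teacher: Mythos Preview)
Your approach is essentially the paper's: group the generators by residue class mod $2\Ga$, observe that the block-sums $h_i$ have degrees in distinct cosets, and use total stability of each $M_i$ inside the blocks. The only real difference is that you verify the generator-wise form $\deg(\si_{ij}f_{ij})\le\deg(f)$ for a single $f$, whereas the paper verifies $\deg(g)\le\deg(g+h)$ for two elements; both work.

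There is one genuine slip, however. Your parenthetical justification for $\deg(h_i)\in c_i+2\Ga$ asserts that ``a sum of elements all of whose degrees lie in $c_i+2\Ga$ again has degree in $c_i+2\Ga$, \ldots\ because the highest-degree part \ldots\ is nonzero unless $h_i=0$.'' This is false in general: take $\Ga=\Z$, $a=X^3$, $b=-X^3+X^2$; both have degree in $1+2\Z$, but $a+b$ has degree $2$. The top parts of the $\si_{ij}f_{ij}$ can cancel. What actually forces $\deg(h_i)\in c_i+2\Ga$ is precisely the total stability of $M_i$, which (in its generator-wise form) gives $\deg(h_i)=\max_j\deg(\si_{ij}f_{ij})\in c_i+2\Ga$. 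So you must invoke the hypothesis on $M_i$ \emph{before} claiming the $\deg(h_i)$ lie in distinct cosets, not after. Once you reorder the argument accordingly, everything goes through. (The paper's proof does exactly this: ``using the total stability of the modules $M_i$'' is what yields that the $g_i$ have distinct degrees mod $2\Ga$.)

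A minor point: the generator $1$ should be adjoined to the class with $c_i\equiv 0\bmod 2\Ga$ if one exists, and otherwise forms its own class; your ``say, the class $i=1$'' is imprecise.
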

\begin{proof}
The "only if"-part it obvious. For the "if"-part take $g,h\in M$
with representations $g=\si_0 +\si_1f_1 +\cdots +\si_sf_s$ and $h
=\ta_0 + \ta_1 f_1 + \cdots +\ta_s f_s$. By grouping the terms
with respect to the equivalence relation and using the total
stability of the modules $M_i$, we get decompositions
$$g=g_1 +\cdots + g_r,\quad h=h_1 + \cdots + h_r$$ with
$g_i,h_i\in M_i$ and all the $g_i$ (as well as the $h_i$) have a
different degree modulo $2\Ga$. So if $g$ and $h$ have the same
degree and $\deg(g)=\deg(g_k),\deg(h)=\deg(h_l)$, then $k=l$ and
the highest degree parts of $g$ and $h$ cannot cancel out, due to
the total stability of  $M_k$.
\end{proof}

Now total stability has the following easy characterization:
\begin{proposition}\label{supp} Let $A=\bigoplus_{\ga\in\Ga}A_{\ga}$ be a grading and let $M$ be a finitely generated quadratic module in $A$.
Let $f_1,\ldots,f_s$ be generators of $M$. Then
$$M \mbox{ is totally stable} \Leftrightarrow \supp\left(\QM(f_1^{\max},\ldots,f_s^{\max})\right)=\{0\}.$$
\end{proposition}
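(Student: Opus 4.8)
The plan is to prove both implications in contrapositive form. Throughout I will work with the reformulation of total stability recorded in Lemma \ref{unab} and in the definition of total stability: $M$ is totally stable if and only if $\deg(\si_jf_j)\le\deg\bigl(\sum_{i=0}^{s}\si_if_i\bigr)$ for every family of sums of squares $\si_0,\dots,\si_s$ and every index $j$, where $f_0=1$ — and, crucially, this characterization applies verbatim to the fixed generators $f_1,\dots,f_s$ of the statement (``any finite set of generators fulfills this condition''). The remaining tools are the grading facts recalled in the preliminaries: for a nonzero sum of squares $\si=\sum_kg_k^2$ one has $\deg\si=2\max_k\deg g_k$ and the highest-degree part $\si^{\max}=\sum_{k:\,2\deg g_k=\deg\si}(g_k^{\max})^2$ is again a \emph{nonzero} sum of squares; moreover $(fg)^{\max}=f^{\max}g^{\max}$, $A$ is an integral domain, $1\in A_0$ so $f_0^{\max}=1$, and $M$ contains all sums of squares.

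For the direction $\supp(\QM(f_1^{\max},\dots,f_s^{\max}))\neq\{0\}\Rightarrow M$ not totally stable, I would take a nonzero $p$ in the support, write $p=\sum_i\mu_if_i^{\max}$ and $-p=\sum_i\nu_if_i^{\max}$ with $\mu_i,\nu_i\in\sum A^2$, and add these: with $\si_i:=\mu_i+\nu_i$ this gives $\sum_i\si_if_i^{\max}=0$, where not every $\si_i$ vanishes (otherwise $p=0$). Let $D:=\max_i\deg(\si_if_i^{\max})$, which is finite, and $J:=\{i:\deg(\si_if_i^{\max})=D\}\neq\emptyset$; extracting the degree-$D$ homogeneous component of $\sum_i\si_if_i^{\max}=0$ yields $\sum_{i\in J}\si_i^{\max}f_i^{\max}=0$. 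Now I lift this cancellation into $M$: put $\tilde\si_i:=\si_i^{\max}$ for $i\in J$ and $\tilde\si_i:=0$ otherwise. Each $\tilde\si_i$ is a sum of squares, so $h:=\sum_i\tilde\si_if_i\in M$ (the $i=0$ term being simply $\si_0^{\max}$); for $i\in J$ the term $\tilde\si_if_i$ has degree $\deg\si_i+\deg f_i=D$, while the degree-$D$ part of $h$ equals $\sum_{i\in J}\si_i^{\max}f_i^{\max}=0$, so $\deg(h)<D$. Choosing any $j\in J$ gives $\deg(\tilde\si_jf_j)=D>\deg(h)=\deg\bigl(\sum_i\tilde\si_if_i\bigr)$, contradicting the degree condition for $f_1,\dots,f_s$; hence $M$ is not totally stable.

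For the converse, $M$ not totally stable $\Rightarrow\supp(\QM(f_1^{\max},\dots,f_s^{\max}))\neq\{0\}$, I would start from a failure of the degree condition: sums of squares $\si_0,\dots,\si_s$ and an index $j$ with $\deg(\si_jf_j)>\deg\bigl(\sum_i\si_if_i\bigr)$. Set $d:=\max_i\deg(\si_if_i)$ (finite, since $\deg(\si_jf_j)>-\infty$) and $I:=\{i:\deg(\si_if_i)=d\}$. From $\deg\bigl(\sum_i\si_if_i\bigr)<\deg(\si_jf_j)\le d$ the degree-$d$ homogeneous component of $\sum_i\si_if_i$ must vanish, i.e.\ $\sum_{i\in I}\si_i^{\max}f_i^{\max}=0$. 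For each $i\in I$ both $\si_i$ and $f_i$ are nonzero, so $\si_i^{\max}f_i^{\max}\neq0$ because $A$ is a domain; therefore $|I|\ge2$. Fixing $i_0\in I$,
$$\si_{i_0}^{\max}f_{i_0}^{\max}=-\sum_{i\in I\setminus\{i_0\}}\si_i^{\max}f_i^{\max}$$
displays the nonzero element $\si_{i_0}^{\max}f_{i_0}^{\max}$ as lying in $\QM(f_1^{\max},\dots,f_s^{\max})$ and, via the right-hand side, also in $-\QM(f_1^{\max},\dots,f_s^{\max})$; hence it lies in the support.

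I expect the only genuine obstacle to be the bookkeeping that makes the two sides talk to each other: one must be careful that the highest-degree part of a sum of squares is again a nonzero sum of squares lying in the quadratic module generated by the $f_i^{\max}$ — this is exactly the point where one uses that a sum of squares of real polynomials vanishes only if each summand does — and that the multiplicativity $(\si_if_i)^{\max}=\si_i^{\max}f_i^{\max}$ and the integral-domain property combine to guarantee that a top-degree cancellation always involves at least two surviving terms; the $f_0=1$ term must be tracked uniformly throughout. Everything else is a routine passage to homogeneous components. I note that Proposition \ref{modtwo} is not needed for this argument, although reducing first to generators whose degrees all coincide modulo $2\Ga$ would be a legitimate alternative route.
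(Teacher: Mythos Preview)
Your proof is correct and follows essentially the same route as the paper: both directions hinge on the fact that $(\si_if_i)^{\max}=\si_i^{\max}f_i^{\max}\in\QM(f_1^{\max},\ldots,f_s^{\max})$, so a failure of total stability is exactly a top-degree cancellation, which is exactly a nonzero element of the support. The one cosmetic difference is that for the implication ``support nontrivial $\Rightarrow$ not totally stable'' the paper skips your homogenization step and uses the $\si_i$ themselves, observing directly that $\sum_i\si_if_i=\sum_i\si_i(f_i-f_i^{\max})$ has degree strictly below $\max_i\deg(\si_if_i)$.
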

\begin{proof}
First suppose
$\supp\left(\QM(f_1^{\max},\ldots,f_s^{\max})\right)\neq\{0\}.$ So
there are sums of squares $\si_{0},\ldots,\si_{s}$, not all zero,
 such that
$\sum_{i=0}^{s}\si_{i}f_{i}^{\max}=0$. Now
\begin{align*}\deg\left(\sum_{i=0}^{s}\si_{i}f_{i} \right)&=\deg\left(\sum_{i=0}^{s}\si_{i}(f_{i}-f_{i}^{\max})\right)\\
&\leq \max_{i}\left\{ \deg(\si_{i}(f_{i} -f_{i}^{\max})
\right\}\\&< \max_{i}\left\{ \deg(\si_{i}f_{i})\right\},
\end{align*}so $M$ is not totally stable.
Conversely, for any sum of squares $\si_j$, the highest degree
part of $\si_jf_j$ lies in $\QM(f_1^{\max},\ldots,f_s^{\max}).$ So
when adding elements of the form $\si_if_i$, the highest degree
parts cannot cancel out, if
$\supp\left(\QM(f_1^{\max},\ldots,f_s^{\max})\right)=\{0\}$. So
$M$ is totally stable.\end{proof}

The good thing about Proposition \ref{supp} is, that it allows to
link total stability to a geometric condition, via Proposition
\ref{sper}:

\begin{theorem}\label{stabmain}
Let $A=\bigoplus_{\ga\in\Ga}A_{\ga}$ be a grading and $M$ a
finitely generated quadratic module in $A$. If for a set of
generators $f_1,\ldots,f_s$ of $M$, the set
$$\Se(f_1^{\max},\ldots,f_s^{\max})\subseteq \R^n$$ is Zariski dense, then
$M$ is totally stable with respect to the grading. If $M$ is
closed under multiplication, then total stability implies the
Zariski denseness for any finite set of generators of $M$.
\end{theorem}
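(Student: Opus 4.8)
The statement to prove is Theorem~\ref{stabmain}, which has two halves, and the plan is to reduce each half directly to the machinery already assembled in the excerpt. For the first implication, suppose $\Se(f_1^{\max},\ldots,f_s^{\max})$ is Zariski dense in $\R^n$. Consider the quadratic module $N:=\QM(f_1^{\max},\ldots,f_s^{\max})$. It is finitely generated by construction, and $\Se(N)=\Se(f_1^{\max},\ldots,f_s^{\max})$ since the generators of $N$ determine $\Se(N)$. By hypothesis this set is Zariski dense, so the first assertion of Proposition~\ref{sper} applies and gives $\supp(N)=\{0\}$. Now invoke Proposition~\ref{supp}: the equivalence there, read from right to left, says precisely that $\supp(\QM(f_1^{\max},\ldots,f_s^{\max}))=\{0\}$ implies $M$ is totally stable with respect to the grading. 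That finishes the first half.

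For the converse, assume in addition that $M$ is closed under multiplication, i.e. $M$ is a preordering, and assume $M$ is totally stable. I would like to conclude Zariski denseness of $\Se(f_1^{\max},\ldots,f_s^{\max})$ for any finite generating set $f_1,\ldots,f_s$. Running Proposition~\ref{supp} in the other direction, total stability of $M$ gives $\supp(N)=\{0\}$ where again $N=\QM(f_1^{\max},\ldots,f_s^{\max})$. To pass from trivial support to Zariski denseness via the second assertion of Proposition~\ref{sper}, I need $N$ to be not merely a quadratic module but a \emph{preordering}. The key point to check is therefore: if $M=\PO$-type, i.e. $M$ is closed under multiplication, then $N=\QM(f_1^{\max},\ldots,f_s^{\max})$ is also closed under multiplication. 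This should follow because taking highest-degree parts is multiplicative, $\deg(fg)=\deg(f)+\deg(g)$ and $(fg)^{\max}=f^{\max}g^{\max}$ (a consequence of the grading axioms recorded in the preliminaries), so the products $f_i^{\max}f_j^{\max}$ that one needs to absorb into $N$ are, up to sums of squares, already of the form $(f_if_j)^{\max}$, and $f_if_j\in M$ because $M$ is multiplicatively closed. One has to be a little careful: $M$ a preordering does not mean its chosen generators $f_1,\ldots,f_s$ are closed under products, only that $f_1^{e_1}\cdots f_s^{e_s}\in M$; but total stability (Lemma~\ref{unab} and the definition of total stability say it is independent of the chosen finite generating set) lets one enlarge the generating set to include all such products, and then their highest-degree parts lie in $N$ by the same argument. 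Once $N$ is known to be a finitely generated preordering with $\supp(N)=\{0\}$, Proposition~\ref{sper} delivers the Zariski denseness of $\Se(N)=\Se(f_1^{\max},\ldots,f_s^{\max})$.

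The main obstacle is exactly this bookkeeping in the converse: verifying that the highest-degree-part construction interacts well with the preordering structure, and in particular that ``$M$ closed under multiplication'' can be leveraged even though $M$ may be presented by generators that are not closed under multiplication. Concretely I expect to argue: replace $\{f_1,\ldots,f_s\}$ by the finite set $\{\,f_1^{e_1}\cdots f_s^{e_s} : e\in\{0,1\}^s\,\}$, which generates the same preordering $M$ and hence (by Proposition~\ref{modtwo}/Lemma~\ref{unab}, total stability being generator-independent) the total stability hypothesis is unchanged; now the highest-degree parts of this enlarged set generate a quadratic module $N'$ which is visibly closed under multiplication because the product of two of its generators is, modulo a square factor, the highest-degree part of another generator. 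Then $\supp(N')=\{0\}$ by Proposition~\ref{supp} applied to the enlarged generating set, and $\Se(N')=\Se(f_1^{\max},\ldots,f_s^{\max})$ up to the harmless extra generators whose zero sets contain the original one, so Proposition~\ref{sper} gives the denseness. Everything else in the proof is a routine chaining of the two propositions, with no new estimates required.
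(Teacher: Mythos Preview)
Your proposal is correct and follows essentially the same approach as the paper: both halves reduce directly to Propositions~\ref{sper} and~\ref{supp}, with the only nontrivial point in the converse being that $\QM(f_1^{\max},\ldots,f_s^{\max})$ is itself a preordering when $M$ is. The paper asserts this in one line (implicitly using total stability so that highest-degree parts of products $f_if_j=\sum_k\si_kf_k$ land in the module), while you reach the same conclusion by enlarging the generating set to all $f_1^{e_1}\cdots f_s^{e_s}$; both routes work, and the remark the paper places immediately after the proof confirms the equality $\Se(f_e^{\max}\mid e\in\{0,1\}^s)=\Se(f_1^{\max},\ldots,f_s^{\max})$ that you needed at the end.
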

\begin{proof} If $\Se(f_1^{\max},\ldots,f_s^{\max})$ is Zariski
dense, then
$$\supp\left(\QM(f_1^{\max},\ldots,f_s^{\max})\right)=\{0\},$$ by
Proposition \ref{sper}. So Proposition \ref{supp} yields the total
stability of $M$. If $M$ is a preordering, generated by
$f_1,\ldots,f_s$ as a quadratic module, and totally stable, then
$\QM(f_1^{\max},\ldots,f_s^{\max})$ is also a preordering. So
Propositions \ref{supp} and \ref{sper} imply the Zariski-denseness
of $\Se(f_1^{\max},\ldots,f_s^{\max})$ in $\R^n$.
\end{proof}

Note that if $M$ is a finitely generated quadratic module which is
closed under multiplication, and $f_1,\ldots,f_t$ generate $M$ as
a \textit{preordering}, then the products $f_e:=f_1^{e_1}\cdots
f_t^{e_t}$ ($e\in\{0,1\}^t$) generate $M$ as a quadratic module,
and
$$\Se(f_1^{\max},\ldots,f_t^{\max})=\Se\left(f_e^{\max}\mid
e\in\{0,1\}^t\right).$$

In the next section we will consider different kinds of gradings
on $A$. The denseness condition from Theorem \ref{stabmain} will
be translated into a geometric condition on the original set
$\Se(M)$.

Recall that we are mostly interested in stability of a finitely
generated quadratic module in the sense of \cite{ps} (see
Definition \ref{staborig}), that is, stability with respect to a
filtration of finite dimensional subspaces. Many of the later
considered gradings do \textit{not} induce such finite dimensional
filtrations. Our goal is then to find stability with respect to
enough different gradings, so that in the end the desired
stability is still obtained. Therefore we consider the following
setup: Let $\Ga,\Ga_{1},\ldots,\Ga_{m}$ be ordered Abelian groups
and let
$$\left\{W_{\ga}\right\}_{\ga\in\Ga},\left\{U_{\ga}^{(j)}\right\}_{\ga\in\Ga_{j}}
(j=1,\ldots,m)$$ be filtrations of $A$.

\begin{definition}\label{cover} The filtration $\left\{W_{\ga}\right\}_{\ga\in\Ga}$ is \textit{covered} by  the
filtrations $$\left\{U_{\ga}^{(j)}\right\}_{\ga\in\Ga_{j}}
(j=1,\ldots,m),$$ if there are monotonically increasing maps
$$\eta\colon\Ga_{1} \times\cdots\times\Ga_{m}\rightarrow\Ga, \
\eta_{j}\colon\Ga\rightarrow\Ga_{j} \ (j=1,\ldots,m), $$ such that
for all $\ga\in\Ga,\ga_{j}\in\Ga_{j} \ (j=1,\ldots,m)$, the
following holds:
$$W_{\ga}\subseteq\bigcap_{j=1}^{m} U_{\eta_{j}(\ga)}^{(j)} \quad \mbox{ and }$$ $$\bigcap_{j=1}^{m}U_{\ga_{j}}^{(j)}
\subseteq W_{\eta(\ga_{1},\ldots,\ga_{m})}.$$
\end{definition}
For $\eta$, \textit{monotonically increasing} refers to the
\textit{partial} ordering on the product group obtained by the
componentwise orderings of the factors.

 We will speak about covering of/by
\textit{gradings}, and mean the notion from Definition \ref{cover}
applied to the induced filtrations. The next theorem makes clear
why we are interested in coverings.

\begin{theorem}\label{cost}
Suppose a quadratic module $M$ in $A$ has generators
$f_{1},\ldots,f_{s},$ which are strongly stable generators with
respect to all the filtrations
$$\left\{U_{\ga}^{(j)}\right\}_{\ga\in\Ga_{j}} (j=1,\ldots,m).$$
Then $f_{1},\ldots,f_{s}$ are also strongly stable generators of
$M$ with respect to any filtration
$\left\{W_{\ga}\right\}_{\ga\in\Ga}$ which is covered by these
filtrations.
\end{theorem}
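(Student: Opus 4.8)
The plan is to unwind the definition of strong stability (Definition \ref{stabdef}(2)) directly, using the two maps $\eta, \eta_1,\dots,\eta_m$ from Definition \ref{cover} to produce a single stability map for the covered filtration $\{W_\ga\}_{\ga\in\Ga}$. So suppose $\si_0,\dots,\si_s$ are sums of squares, written $\si_i = g_{i,1}^2 + \cdots + g_{i,k_i}^2$, and suppose $\sum_{i=0}^s \si_i f_i \in W_\ga$ for some $\ga\in\Ga$. The goal is to find a monotonically increasing $\rho\colon\Ga\to\Ga$, independent of the particular sums of squares, with $g_{i,j}\in W_{\rho(\ga)}$ for all $i,j$.

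First I would use the first inclusion in Definition \ref{cover}, namely $W_\ga \subseteq \bigcap_{j=1}^m U^{(j)}_{\eta_j(\ga)}$. This gives $\sum_{i=0}^s \si_i f_i \in U^{(j)}_{\eta_j(\ga)}$ for each $j = 1,\dots,m$. Now for each fixed $j$, the hypothesis says $f_1,\dots,f_s$ are strongly stable generators with respect to $\{U^{(j)}_\ga\}_{\ga\in\Ga_j}$, so there is a monotonically increasing map $\rho_j\colon\Ga_j\to\Ga_j$ with $g_{i,l}\in U^{(j)}_{\rho_j(\eta_j(\ga))}$ for all $i,l$. Applying this for every $j$ simultaneously, each $g_{i,l}$ lies in $\bigcap_{j=1}^m U^{(j)}_{\rho_j(\eta_j(\ga))}$.

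Next I would apply the second inclusion from Definition \ref{cover}, $\bigcap_{j=1}^m U^{(j)}_{\ga_j} \subseteq W_{\eta(\ga_1,\dots,\ga_m)}$, with $\ga_j := \rho_j(\eta_j(\ga))$. This yields $g_{i,l}\in W_{\eta(\rho_1(\eta_1(\ga)),\dots,\rho_m(\eta_m(\ga)))}$ for all $i,l$. So the natural candidate is
\[
\rho(\ga) := \eta\bigl(\rho_1(\eta_1(\ga)),\dots,\rho_m(\eta_m(\ga))\bigr).
\]
It remains to check that $\rho$ is monotonically increasing, which is routine: each $\eta_j$ is monotonically increasing $\Ga\to\Ga_j$, each $\rho_j$ is monotonically increasing $\Ga_j\to\Ga_j$, so the tuple $\bigl(\rho_1(\eta_1(\ga)),\dots,\rho_m(\eta_m(\ga))\bigr)$ increases in the componentwise partial order as $\ga$ increases, and $\eta$ is monotonically increasing with respect to that partial order by hypothesis; composing monotone maps gives a monotone map. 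That establishes the theorem.

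I do not anticipate a serious obstacle here — the statement is essentially a bookkeeping exercise that chains the two covering inclusions around the strong-stability maps $\rho_j$. The one point that requires a little care is that the argument genuinely needs the \emph{strong} stability hypothesis rather than ordinary stability: ordinary stability controls only the ambient subspace containing $\sum_i\si_if_i$, not the individual summands $g_{i,l}$, so the passage from "$\sum_i\si_if_i\in U^{(j)}_{\eta_j(\ga)}$" to "$g_{i,l}\in U^{(j)}_{\rho_j(\eta_j(\ga))}$" would fail. The other mild subtlety is that the maps $\rho_j$ a priori depend only on the filtration $\{U^{(j)}_\ga\}$ and the generators (not on the $\si_i$), which is exactly what is needed to get a uniform $\rho$; this is already built into Definition \ref{stabdef}(2), so nothing extra is required.
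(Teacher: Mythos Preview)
Your argument is correct and follows exactly the same route as the paper's own proof: push $W_\ga$ into each $U^{(j)}_{\eta_j(\ga)}$ via the first covering inclusion, apply the strong-stability maps $\rho_j$, and then pull back into $W$ via the second covering inclusion to obtain $\rho(\ga)=\eta(\rho_1(\eta_1(\ga)),\dots,\rho_m(\eta_m(\ga)))$. Your explicit check that $\rho$ is monotone and your remark on why \emph{strong} stability is essential are welcome additions that the paper leaves implicit.
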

\begin{proof} For every $j=1,\ldots,m$, take a stability map $\rh_{j}$ for the generators with
respect to the filtration
$\left\{U_{\ga}^{(j)}\right\}_{\ga\in\Ga_{j}}$ (remember
Definition $\ref{stabdef}(2)$). As in Definition $\ref{cover}$ ,
the covering maps are denoted by $\eta$ and $\eta_{j}$.

Take sums of squares $\si_{0},\ldots,\si_{s}$, where
$\si_{i}=g_{i,1}^{2}+\cdots+g_{i,k_{i}}^{2}$ and suppose
$\sum_{i=0}^{s}\si_{i}f_{i}\in W_{\ga}$ for some $\ga\in\Ga$. Then
$\sum_{i=0}^{s}\si_{i}f_{i}\in U_{\eta_{j}(\ga)}^{(j)}$ for all
$j$. So by strong stability, $$g_{i,l}\in
U_{\rh_{j}\left(\eta_{j}(\ga)\right)}^{(j)}\mbox{ for all }
j,i,l.$$ But then $$g_{i,l}\in
W_{\eta\left(\rh_{1}(\eta_{1}(\ga)),\ldots,\rh_{m}(\eta_{m}(\ga))
\right)}\mbox{ for all }i,l,$$ which shows the strong stability
with respect to $\left\{W_{\ga}\right\}_{\ga\in\Ga}$.
\end{proof}
So we are taking the following approach towards stability in the
sense of \cite{ps}: First we use Theorem \ref{stabmain} for enough
different gradings on $A$, to obtain conditions for total (and
therefore strong) stability of a quadratic module with respect to
each of the gradings. If the gradings are chosen in the right way,
Theorem \ref{cost} yields total stability with respect to a
filtration of finite dimensional subspaces, and therefore
stability in the sense of \cite{ps}.

\begin{remark} One checks that all the results hold in
more general algebras than the polynomial algebra over $\R$.
Indeed, for any real closed field $R$ and any finitely generated
$R$-algebra that is a real domain, the results remain valid.
\textit{Real} means, that a sum of squares $a_1^2+\cdot +a_t^2$ in
$A$ can only be zero if all $a_i$ are zero. $A$ is called a
\textit{domain}, if it does not contain zero divisors. Note that
we have used these two properties at several points in the
previous proofs.

The notions of stable and strongly stable generators with respect
to a filtration even make sense in arbitrary $R$-algebras. We come
back to this in the last section of the paper, where we will
generalize a result from \cite{cks}.

\end{remark}

\section{Examples of Gradings and Applications}\label{sec}

As above, let $A=\R[\underline{X}]=\R[X_1,\ldots,X_n]$ be the real
polynomial algebra in $n$ variables. For
$\de=(\de_1,\ldots,\de_n)\in\N^{n}$ and
$z=(z_1,\ldots,z_n)\in\Z^n$ we write
$$\underline{X}^{\de}:= X_1^{\de_1}\cdots X_n^{\de_n}$$ and $$z\cdot \de:=
z_1\de_1 + \cdots + z_n\de_n.$$ For $d\in\Z$ define
$$A_d^{(z)}:=\left\{ \sum_{\de\in\N^n,\ z\cdot\de=d}
c_{\de}\underline{X}^{\de}\mid c_{\de}\in\R\right\}.$$ Then
$$A=\bigoplus_{d\in\Z} A_d^{(z)}$$ is a grading indexed
in the ordered group $(\Z,\leq)$, to which we will refer to as the
\textit{$z$-grading}. For example, $z=(1,\ldots,1)$ gives rise to
the usual degree-grading on $A$, whereas $z=(1,0,\ldots,0)$
defines the grading with respect to the usual degree in $X_1$.
Note that the filtration induced by such a $z$-grading consists of
finite dimensional linear subspaces of $A$ if and only if all
entries of $z$ are positive.

We want to characterize the denseness condition from Theorem
$\ref{stabmain}$ for these $z$-gradings. For a compact set $K$ in
$\R^n$ with nonempty interior, we define the \textit{tentacle of
$K$ in direction of $z$} in the following way:
$$T_{K,z}:=\left\{(\la^{z_{1}}x_{1},\ldots,\la^{z_{n}}x_{n})\mid
\la\geq 1,\ x=(x_{1},\ldots,x_{n})\in K\right\}.$$ For
$z=(1,\ldots,1)$, such a set is just a full dimensional cone in
$\R^n$. For $z=(1,0,\ldots,0)$ it is a full dimensional cylinder
going to infinity in the direction of $x_1$. For
$z=(1,-1)\in\Z^2$, something like the set defined by $xy \leq 2,
xy \geq 1$ and $x \geq 1$ would be such a set.

\begin{proposition}\label{xxx} Let $f_{1},\ldots,f_{s}$ be polynomials in the graded polynomial algebra $A=\bigoplus_{d\in \Z}A_{d}^{(z)}$,
where $z\in \Z^{n}$. Then the set
$$\Se(f_{1}^{\max},\ldots,f_{s}^{\max})\subseteq \R^n$$ is
Zariski-dense in $\R^{n},$ if and only if the set
$$\Se(f_{1},\ldots,f_{s})\subseteq \R^n$$ contains a tentacle $T_{K,z}$ for some
compact $K\subseteq \R^{n}$ with nonempty interior.
\end{proposition}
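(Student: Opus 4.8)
The plan is to relate the highest-degree parts $f_i^{\max}$ with respect to the $z$-grading to the asymptotic behaviour of the $f_i$ along the tentacle directions, via the substitution $x \mapsto (\lambda^{z_1}x_1,\dots,\lambda^{z_n}x_n)$. The key computation is that for a polynomial $f=\sum_{d} f_d^{(z)}$ with $z$-degree $D=\deg_z(f)$, one has
$$f(\lambda^{z_1}x_1,\dots,\lambda^{z_n}x_n) = \sum_{d\le D}\lambda^{d} f_d^{(z)}(x) = \lambda^{D}\bigl(f^{\max}(x) + o(1)\bigr)$$
as $\lambda\to\infty$, for each fixed $x\in\R^n$ (the lower terms carry strictly smaller powers of $\lambda$). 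So $\lambda^{-D} f(\lambda^{z_1}x_1,\dots,\lambda^{z_n}x_n)\to f^{\max}(x)$ pointwise.

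For the "if" direction, suppose $\Se(f_1,\dots,f_s)$ contains a tentacle $T_{K,z}$ with $K$ compact of nonempty interior. I want to show $\Se(f_1^{\max},\dots,f_s^{\max})$ is Zariski-dense, and for this it suffices to show it contains a nonempty (Euclidean) open set; since $K$ has nonempty interior, it is in turn enough to show $\mathrm{int}(K)\subseteq \Se(f_i^{\max})$ for each $i$. Fix $x\in\mathrm{int}(K)$; then $(\lambda^{z_1}x_1,\dots,\lambda^{z_n}x_n)\in T_{K,z}\subseteq\Se(f_i)$ for all $\lambda\ge 1$, so $f_i(\lambda^{z_1}x_1,\dots,\lambda^{z_n}x_n)\ge 0$ for all $\lambda\ge 1$. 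Dividing by $\lambda^{\deg_z(f_i)}$ (positive) and letting $\lambda\to\infty$ gives $f_i^{\max}(x)\ge 0$ by the limit above. Hence $\mathrm{int}(K)\subseteq\Se(f_1^{\max},\dots,f_s^{\max})$, which is therefore Zariski-dense.

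For the "only if" direction, assume $\Se(f_1^{\max},\dots,f_s^{\max})$ is Zariski-dense, hence contains a point $x^{(0)}$ at which all $f_i^{\max}$ are strictly positive (Zariski-dense sets meet the nonempty open locus where no $f_i^{\max}$ vanishes, unless some $f_i^{\max}\equiv 0$, which cannot happen as $f_i^{\max}$ is by definition a nonzero homogeneous component) — here I should be slightly careful and instead pick $x^{(0)}$ in the Zariski-dense set with $f_i^{\max}(x^{(0)})\neq 0$ for all $i$, then observe all these values are necessarily $>0$ if I have arranged, say by a preliminary open perturbation, that $x^{(0)}$ lies in $\Se$; cleanly, I take $x^{(0)}$ a point where each $f_i^{\max}$ is nonzero (a dense open condition) and lies in the dense set $\Se(f_i^{\max})$, forcing $f_i^{\max}(x^{(0)})>0$. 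Now choose a small compact ball $K$ around $x^{(0)}$ on which every $f_i^{\max}$ remains bounded below by some $c>0$ and every lower-order part of $f_i$ is bounded in absolute value. For $x\in K$,
$$f_i(\lambda^{z_1}x_1,\dots,\lambda^{z_n}x_n) = \lambda^{\deg_z(f_i)} f_i^{\max}(x) + (\text{terms with strictly lower powers of }\lambda),$$
and the lower terms are bounded by $C\lambda^{\deg_z(f_i)-1}$ uniformly on $K$; hence for $\lambda$ large enough (uniformly over the compact $K$) this quantity is positive. Shrinking $K$ radially toward $x^{(0)}$ (replacing $K$ by $x^{(0)} + \epsilon(K - x^{(0)})$, still a compact set with nonempty interior, where for tentacle direction $z$ the natural reparametrization is by $\lambda\geq 1$) and absorbing the "$\lambda$ large" threshold into the definition of the tentacle base, I obtain a compact $K'$ with nonempty interior such that $T_{K',z}\subseteq\Se(f_1,\dots,f_s)$.

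The main obstacle is the bookkeeping in the "only if" direction: making the threshold "$\lambda\ge\lambda_0$" disappear, since a tentacle by definition uses all $\lambda\ge 1$. The fix is the scaling identity $(\lambda^{z_1}x_1,\dots) = (\mu^{z_1}(\lambda_0^{z_1}x_1),\dots)$ with $\mu=\lambda/\lambda_0\ge 1$, so the truncated tentacle $\{\lambda\ge\lambda_0\}$ over $K$ equals the full tentacle over the rescaled base $K_{\lambda_0}:=\{(\lambda_0^{z_1}x_1,\dots,\lambda_0^{z_n}x_n):x\in K\}$, which is again compact with nonempty interior. A secondary point to handle carefully is the existence of the strictly positive point $x^{(0)}$: Zariski-denseness gives a point avoiding the proper Zariski-closed set $\bigcup_i\{f_i^{\max}=0\}$ and also lying in $\bigcap_i\Se(f_i^{\max})$ (the latter is exactly the closure statement one is given), and on that point each nonzero value $f_i^{\max}(x^{(0)})$ must be $\ge 0$, hence $>0$; continuity then propagates positivity to a neighbourhood.
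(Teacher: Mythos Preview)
Your proof is correct and follows the same approach as the paper's: both directions rest on the substitution $x\mapsto(\lambda^{z_1}x_1,\dots,\lambda^{z_n}x_n)$, which turns $f_i$ into $\lambda^{\deg_z f_i}\bigl(f_i^{\max}(x)+o(1)\bigr)$, together with the rescaling $K\mapsto\{(\lambda_0^{z_1}x_1,\dots,\lambda_0^{z_n}x_n):x\in K\}$ to absorb the threshold $\lambda\ge\lambda_0$ into the tentacle base. The paper is simply terser about producing a compact $K$ on which all $f_i^{\max}$ are strictly positive (it states this as equivalent to Zariski-denseness without the argument you spell out), and your aside about radially shrinking $K$ toward $x^{(0)}$ is unnecessary once the rescaling trick is in hand.
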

\begin{proof}
First suppose $\Se(f_{1}^{\max},\ldots,f_{s}^{\max})$ is
Zariski-dense, which is equivalent to saying that there is a
compact set $K$ with nonempty interior, on which all
$f_{i}^{\max}$ are positive. Write each $f_{i}$ as a sum of
homogeneous elements (with respect to the $z$-grading), for
example
$$f_{1}=h_{d_{1}}+\ldots+h_{d_{t}},$$ where $d_{1}<\ldots < d_{t}$
and $0\neq h_{d_{j}}\in A_{d_{j}}^{(z)}$. Then for $x\in \R^{n}$
and $\la>0$
$$f_{1}(\la^{z_{1}}x_{1},\ldots,\la^{z_{n}}x_{n})=\la^{d_{1}}h_{d_{1}}(x)+\ldots+
\la^{d_{t}}h_{d_{t}}(x).$$ As $h_{d_{t}}(x)=f_{1}^{\max}(x)>0$ if
$x$ is taken from $K$, the expression is positive for $\lambda\geq
N$  with $N$ big enough. Thereby $N$ can be chosen to depend only
on the size of the coefficients $h_{d_{j}}(x)$. So $N$ can be
chosen big enough to make
$f_{i}(\la^{z_{1}}x_{1},\ldots,\la^{z_{n}}x_{n})$ positive for all
$\la\geq N$, $x\in K$ and all $i=1,\ldots,s.$ Replacing $K$ by
$$K':=\left\{ (N^{z_1}x_1,\ldots,N^{z_n}x_n)\mid x=(x_1,\ldots,x_n)\in
K\right\}$$ we find $T_{K',z}\subseteq \Se(f_{1},\ldots,f_{s})$.

Conversely, suppose $\Se(f_{1},\ldots,f_{s})$ contains a tentacle
$T_{K,z}$. Then all the highest degree parts of the $f_{i}$ must
be nonnegative on $K$, with the same argument as above. So
$\Se(f_{1}^{\max},\ldots,f_{s}^{\max})$ contains $K$ and is
therefore Zariski-dense in $\R^{n}$.
\end{proof}

Combined with Theorem $\ref{stabmain}$ we get:
\begin{theorem}\label{denseex}
Let $f_{1},\ldots,f_{s}$ be polynomials in the graded polynomial
algebra $A=\bigoplus_{d\in \Z}A_{d}^{(z)}$, where $z\in \Z^{n}$.
If the set $$\Se(f_{1},\ldots,f_{s})\subseteq \R^n$$ contains some
tentacle $T_{K,z}$ ($K$ compact with nonempty interior), then the
quadratic module $M=\QM(f_{1},\ldots,f_{s})$ is totally stable. If
$M=\QM(f_{1},\ldots,f_{s})$ is closed under multiplication, then
$\Se(f_1,\ldots,f_s)$ must contain such a tentacle for $M$ to be
totally stable.
\end{theorem}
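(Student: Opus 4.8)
The statement is Theorem~\ref{denseex}, which simply combines Proposition~\ref{xxx} with Theorem~\ref{stabmain}, so the plan is essentially to quote the two. First I would observe that we are given a $z$-grading on $A$, so the notions ``totally stable'' and ``Zariski dense highest-degree locus'' from Theorem~\ref{stabmain} apply to this grading. Second, suppose $\Se(f_1,\ldots,f_s)$ contains a tentacle $T_{K,z}$ with $K$ compact with nonempty interior. By Proposition~\ref{xxx} this is equivalent to $\Se(f_1^{\max},\ldots,f_s^{\max})$ being Zariski dense in $\R^n$. Then Theorem~\ref{stabmain} immediately gives that $M=\QM(f_1,\ldots,f_s)$ is totally stable with respect to the $z$-grading. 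For the converse, suppose $M$ is closed under multiplication and totally stable. Then by Theorem~\ref{stabmain} (the second half, using that $\QM(f_1^{\max},\ldots,f_s^{\max})$ is again a preordering) $\Se(f_1^{\max},\ldots,f_s^{\max})$ is Zariski dense in $\R^n$, and Proposition~\ref{xxx} translates this back into the statement that $\Se(f_1,\ldots,f_s)$ contains a tentacle $T_{K,z}$ for some compact $K$ with nonempty interior.

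The one point that needs a word of care is the hypothesis match between Proposition~\ref{xxx} and Theorem~\ref{stabmain}: Theorem~\ref{stabmain} is phrased ``for a set of generators $f_1,\ldots,f_s$'', and Proposition~\ref{xxx} produces the Zariski-denseness for \emph{that same} set of generators, so there is no mismatch; total stability, once obtained, holds for $M$ independently of the chosen generators. Likewise in the converse direction, ``totally stable'' is a property of $M$, and Theorem~\ref{stabmain} yields Zariski-denseness of $\Se(f_1^{\max},\ldots,f_s^{\max})$ for \emph{any} finite generating set, in particular for $f_1,\ldots,f_s$, which is exactly what Proposition~\ref{xxx} needs as input. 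So the two implications chain cleanly.

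There is essentially no obstacle here: the theorem is a formal corollary, and the real content was already carried out in Propositions~\ref{sper}, \ref{supp}, \ref{xxx} and Theorem~\ref{stabmain}. If anything, the only thing to be slightly careful about is to state the direction of the equivalences correctly (tentacle inside $\Se(f_1,\ldots,f_s)$ $\iff$ $\Se(f_i^{\max})$ Zariski dense $\iff$ total stability, with the last $\Leftarrow$ requiring the preordering hypothesis), and not to overclaim in the non-preordering case, where only the implication ``tentacle $\Rightarrow$ totally stable'' is available. Hence the proof is just:
\begin{quote}
If $\Se(f_1,\ldots,f_s)$ contains a tentacle $T_{K,z}$, then by Proposition~\ref{xxx} the set $\Se(f_1^{\max},\ldots,f_s^{\max})$ is Zariski dense in $\R^n$, so $M$ is totally stable by Theorem~\ref{stabmain}. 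Conversely, if $M$ is closed under multiplication and totally stable, then $\Se(f_1^{\max},\ldots,f_s^{\max})$ is Zariski dense by Theorem~\ref{stabmain}, hence $\Se(f_1,\ldots,f_s)$ contains such a tentacle by Proposition~\ref{xxx}.
\end{quote}
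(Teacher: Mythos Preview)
Your proposal is correct and matches the paper exactly: the paper does not even give a separate proof, but simply introduces the theorem with ``Combined with Theorem~\ref{stabmain} we get:'', treating it as an immediate corollary of Proposition~\ref{xxx} and Theorem~\ref{stabmain}. Your chaining of the two implications, including the care about the preordering hypothesis in the converse direction, is precisely what is intended.
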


For the $z$-gradings, we can also settle the questions of
coverings:

\begin{proposition}\label{qcover} Let $z,z^{(1)},\ldots,z^{(m)}\in\Z^{n}$ and assume there exist numbers \linebreak $r_{1},\ldots,r_{m}$,$t_{1},\ldots,t_{m}\in\N,$
such that the following conditions hold (where $ v\succeq w$ means
$\geq$ in each component of the vectors $v,w$  in $\Z^{n}$):
$$r_{1}z^{(1)}+\cdots+r_{m}z^{(m)}\succeq z \mbox{ and }$$
$$t_{j}z\succeq z^{(j)} \mbox{ for } j=1,\ldots,m.$$ Then the
$z$-grading on $\R[\underline{X}]$ is covered by the
$z^{(j)}$-gradings.
\end{proposition}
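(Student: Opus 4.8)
The plan is to exhibit the covering maps of Definition \ref{cover} explicitly and to check the two required inclusions by a direct look at monomials. Here $\Ga=\Ga_1=\cdots=\Ga_m=\Z$, the filtration $\{W_\ga\}$ is the one induced by the $z$-grading, so that $W_\ga$ is the $\R$-span of all monomials $\underline{X}^\de$ with $z\cdot\de\le\ga$, and likewise $U^{(j)}_\ga$ is the span of all monomials $\underline{X}^\de$ with $z^{(j)}\cdot\de\le\ga$.

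The one elementary fact everything rests on is this: if $v\succeq w$ in $\Z^n$ and $\de\in\N^n$, then $v\cdot\de\ge w\cdot\de$, since $(v-w)\cdot\de$ is a sum of products of nonnegative integers. I would define $\eta_j\colon\Z\to\Z$ by $\eta_j(\ga):=t_j\ga$ and $\eta\colon\Z^m\to\Z$ by $\eta(\ga_1,\ldots,\ga_m):=r_1\ga_1+\cdots+r_m\ga_m$; both are monotonically increasing (the second one with respect to the componentwise order on $\Z^m$) because all $t_j$ and all $r_j$ are nonnegative.

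To verify $W_\ga\subseteq\bigcap_{j=1}^m U^{(j)}_{\eta_j(\ga)}$, take $f\in W_\ga$ and any monomial $\underline{X}^\de$ occurring in $f$, so $z\cdot\de\le\ga$. Using $t_jz\succeq z^{(j)}$ and the elementary fact, $z^{(j)}\cdot\de\le t_j(z\cdot\de)\le t_j\ga=\eta_j(\ga)$, the last inequality because $t_j\ge 0$; hence $f\in U^{(j)}_{\eta_j(\ga)}$ for every $j$. To verify $\bigcap_{j=1}^m U^{(j)}_{\ga_j}\subseteq W_{\eta(\ga_1,\ldots,\ga_m)}$, take $f$ in the intersection and a monomial $\underline{X}^\de$ occurring in it, so $z^{(j)}\cdot\de\le\ga_j$ for all $j$. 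Using $r_1z^{(1)}+\cdots+r_mz^{(m)}\succeq z$ and the elementary fact, $z\cdot\de\le\sum_{j=1}^m r_j(z^{(j)}\cdot\de)\le\sum_{j=1}^m r_j\ga_j=\eta(\ga_1,\ldots,\ga_m)$, so $f\in W_{\eta(\ga_1,\ldots,\ga_m)}$.

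I do not expect a genuine obstacle here; the proof is a bookkeeping exercise. The only points that need a little care are to use that $\de\in\N^n$ has nonnegative entries (so that pairing with a nonnegative vector is order-preserving) and that one may multiply the inequality $z\cdot\de\le\ga$ through by the nonnegative integer $t_j$; both are exactly what the hypotheses $t_jz\succeq z^{(j)}$ and $r_1z^{(1)}+\cdots+r_mz^{(m)}\succeq z$ are designed to supply. Together with the monotonicity of $\eta$ and the $\eta_j$ noted above, this establishes that the $z$-grading is covered by the $z^{(j)}$-gradings.
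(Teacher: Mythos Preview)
Your proof is correct and essentially identical to the paper's own argument: you define the same covering maps $\eta_j(\ga)=t_j\ga$ and $\eta(\ga_1,\ldots,\ga_m)=\sum_j r_j\ga_j$ and verify the two inclusions by looking at monomials, using that $v\succeq w$ and $\de\in\N^n$ imply $v\cdot\de\ge w\cdot\de$. If anything, you are slightly more explicit than the paper about why the maps are monotone and why $\de\in\N^n$ matters.
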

\begin{proof}
We denote by $\deg(f)$ and $\deg^{(j)}(f)$ the degree of a
polynomial $f$ with respect to the $z$- and the $z^{(j)}$-grading,
respectively. First take a polynomial $f$ and suppose $\deg(f)\leq
d$ for $d\in \Z$. So for every monomial $c\underline{X}^{\de}$
occurring in $f$ we have $z\cdot\de\leq d.$ Now for every
$j=1,\ldots,m$,
$$z^{(j)}\cdot\de\leq t_{j}\left(z\cdot\de\right)\leq t_{j}d,$$ so
$\deg^{(j)}(f)\leq t_{j}d.$ Thus $\ps_{j}\colon \Z\rightarrow\Z;
d\mapsto t_{j}d$ fulfills the condition from Definition
$\ref{cover}$.

Now suppose $\deg^{(j)}(f)\leq d_{j}$ for $d_{j}\in\Z$ and
$j=1,\ldots,m$. So for every monomial $c\underline{X}^{\de}$
occurring in $f$,
$$z\cdot\de\leq
r_{1}\left(z^{(1)}\cdot\de\right)+\cdots+r_{m}\left(z^{(m)}\cdot\de\right)\leq
r_{1}d_{1}+\cdots +r_{m}d_{m}$$ holds. So $\ps\colon
\Z^{m}\rightarrow \Z; (d_{1},\ldots,d_{m})\mapsto
r_{1}d_{1}+\cdots+r_{m}d_{m}$ fulfills the other condition from
Definition $\ref{cover}$.
\end{proof}

For example, the usual grading ($z=(1,\ldots,1)$) is covered by
the gradings defined by
$$z^{(1)}=(1,0,\ldots,0), z^{(2)}=(0,1,0,\ldots,0),\ldots,
z^{(n)}=(0,\ldots,0,1).$$ For $n=2$, the two gradings defined by
$$z^{(1)}=(0,1), z^{(2)}=(1,-1)$$ also cover the usual grading.

The following Main Theorem merges the above explained results.
\begin{theorem}\label{all} Let $S\subseteq \R^n$ be a basic closed
semi-algebraic set that contains tentacles $T_{K_j,z^{(j)}}$,
where $K_j$ is compact with nonempty interior and $z^{(j)}\in
\Z^n$ $(j=1,\ldots,m)$. If there exist $r_{1},\ldots,r_{m}\in\N$
such that $$r_{1}z^{(1)}+\cdots+r_{m}z^{(m)} \succ 0,$$ then any
finitely generated quadratic module describing $S$ is stable and
closed. So if $n\geq 2$, such a quadratic module does never have
the Strong Moment Property.

 Such natural numbers $r_i$ exist, if and
only if the only polynomial functions bounded on $$
\bigcup_{j=1}^m T_{K_j,z^{(j)}}
$$  are the reals.
\end{theorem}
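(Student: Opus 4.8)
The plan is to prove Theorem~\ref{all} by first assembling the stability claim from the earlier machinery, and then separately treating the boundedness characterization of the arithmetic condition on the $r_i$.

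For the first part, I would argue as follows. Since each tentacle $T_{K_j,z^{(j)}}\subseteq S$, Theorem~\ref{denseex} (equivalently, Proposition~\ref{xxx} combined with Theorem~\ref{stabmain}) shows that for any finitely generated quadratic module $M$ with $\Se(M)=S$, and any choice of generators $f_1,\ldots,f_s$, the set $\Se(f_1^{\max_j},\ldots,f_s^{\max_j})$ is Zariski-dense, where $\max_j$ denotes the highest-degree part with respect to the $z^{(j)}$-grading; hence $M$ is totally stable with respect to each $z^{(j)}$-grading, and therefore (since total stability implies strong stability, via Proposition~\ref{supp} and the remarks after Definition of total stability) $f_1,\ldots,f_s$ are strongly stable generators with respect to each $z^{(j)}$-grading. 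Now, the hypothesis $r_1z^{(1)}+\cdots+r_mz^{(m)}\succ 0$ together with Proposition~\ref{qcover} — here we choose $z=(1,\ldots,1)$, and we may pick each $t_j$ large enough that $t_j(1,\ldots,1)\succeq z^{(j)}$ componentwise (possible since the right side is a fixed integer vector) — shows that the usual degree-grading is covered by the $z^{(j)}$-gradings. Wait: Proposition~\ref{qcover} needs $r_1z^{(1)}+\cdots+r_mz^{(m)}\succeq z$, and $\succ 0$ gives $\succeq (1,\ldots,1)$ since the entries are integers, so this is exactly the setup with $z=(1,\ldots,1)$. Then Theorem~\ref{cost} gives that $f_1,\ldots,f_s$ are strongly stable generators with respect to the usual degree-grading. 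Since the degree-grading induces a filtration by finite-dimensional subspaces, this is precisely stability in the sense of Definition~\ref{staborig}. Closedness of $M$ then follows from the Powers--Scheiderer theorem, provided $\Se(M)$ is Zariski-dense in $\R^n$; but $S$ contains a tentacle $T_{K_1,z^{(1)}}$, which has nonempty interior (as $K_1$ does), hence is Zariski-dense. Finally, if $n\ge 2$, the tentacle $T_{K_1,z^{(1)}}$ is at least $2$-dimensional, so $S$ has dimension at least two, and Scheiderer's theorem rules out the Strong Moment Property.

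For the second part — the equivalence between existence of the $r_i$ and the boundedness statement — I would argue by contraposition/direct unwinding of what "bounded on a tentacle" means. One direction: suppose no such $r_i\in\N$ exist, i.e. for every choice $r_1,\ldots,r_m\in\N$ the vector $r_1z^{(1)}+\cdots+r_mz^{(m)}$ fails to be $\succ 0$; equivalently, there is a coordinate $k$ such that no nonnegative integer combination of the $z^{(j)}$ has positive $k$-th entry, i.e. $z^{(1)}_k,\ldots,z^{(m)}_k\le 0$ (this is where a small convexity/linear-programming observation is needed: a nonnegative rational combination can be made to have positive $k$-th coordinate iff some $z^{(j)}_k>0$, and one then clears denominators; if instead we needed all coordinates simultaneously positive, one would intersect over the coordinates — so actually the condition "$\exists r_i$ with the sum $\succ 0$" is equivalent to: for every coordinate $k$ there is some $j$ with $z^{(j)}_k>0$). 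Granting that reformulation: if some coordinate $k$ has $z^{(j)}_k\le 0$ for all $j$, then on each tentacle $T_{K_j,z^{(j)}}$ the $k$-th coordinate $\lambda^{z^{(j)}_k}x_k$ stays bounded as $\lambda\to\infty$ (since $x_k$ ranges over the bounded set $\pi_k(K_j)$ and $\lambda^{z^{(j)}_k}\le 1$), so $X_k$ — or, to be safe, a suitable polynomial in $X_k$ if $\pi_k(K_j)$ is not symmetric, but in fact $X_k$ itself is bounded above and below — is a nonconstant polynomial bounded on $\bigcup_j T_{K_j,z^{(j)}}$. Conversely, suppose the $r_i$ do exist, so every coordinate $k$ has some $j(k)$ with $z^{(j(k))}_k>0$; I must show every polynomial bounded on $\bigcup_j T_{K_j,z^{(j)}}$ is constant. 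Take such a $p$ and write $p=\sum_\de c_\de \underline{X}^\de$. Evaluating along the curve $\lambda\mapsto(\lambda^{z^{(j)}_1}x_1,\ldots,\lambda^{z^{(j)}_n}x_n)$ for fixed generic $x\in K_j$ gives $\sum_\de c_\de x^\de \lambda^{z^{(j)}\cdot\de}$, which as $\lambda\to\infty$ must remain bounded, forcing $c_\de x^\de=0$ whenever $z^{(j)}\cdot\de>0$; since this holds for all $x$ in the interior of $K_j$, we get $c_\de=0$ whenever $z^{(j)}\cdot\de>0$, for every $j$. So every monomial $\underline{X}^\de$ with $c_\de\ne 0$ has $z^{(j)}\cdot\de\le 0$ for all $j$, hence $(r_1z^{(1)}+\cdots+r_mz^{(m)})\cdot\de\le 0$; but that combined vector is $\succ 0$ and $\de\in\N^n$, so this forces $\de=0$. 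Thus $p=c_0$ is constant.

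\textbf{Expected main obstacle.} The conceptual content of the first part is essentially bookkeeping with the quoted theorems, so the only delicate point there is getting the covering hypothesis of Proposition~\ref{qcover} exactly right — in particular noticing that $\succ 0$ over $\Z$ already gives $\succeq(1,\ldots,1)$ and that the $t_j$ can be chosen freely. The genuinely substantive step is the linear-algebra reformulation in the second part: turning "$\exists\,r_i\in\N:\ \sum r_jz^{(j)}\succ 0$" into the per-coordinate condition "$\forall k\ \exists j:\ z^{(j)}_k>0$." This needs a clean argument that one can pass from separate witnesses for each coordinate to a single simultaneous witness — scaling each coordinate's witness by a large enough integer and summing — and conversely that $\succ 0$ is genuinely impossible if some coordinate is nonpositive in every $z^{(j)}$. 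After that, the boundedness-along-tentacles computations are routine, the only care being that one evaluates at $x$ in the \emph{interior} of $K_j$ so that "$c_\de x^\de=0$ for all such $x$" legitimately yields $c_\de=0$.
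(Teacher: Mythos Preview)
Your treatment of the first claim (stability, closedness, failure of the Strong Moment Property) is correct and is exactly the paper's approach, only spelled out in more detail; the paper merely says this part ``is clear from the above results.'' Your direction ``$r_i$ exist $\Rightarrow$ only constants are bounded'' in the second part is also correct and matches the paper's reasoning (via the observation that $f$ is bounded on $T_{K,z}$ iff its $z$-degree is $\le 0$).

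The gap is in the converse direction of the boundedness characterization. The reformulation you propose,
\[
\exists\, r_1,\ldots,r_m\in\N:\ \sum_j r_j z^{(j)}\succ 0
\quad\Longleftrightarrow\quad
\forall k\ \exists j:\ z^{(j)}_k>0,
\]
is false. Take $n=m=2$, $z^{(1)}=(1,-2)$, $z^{(2)}=(-2,1)$. The right-hand side holds (coordinate~$1$ via $z^{(1)}$, coordinate~$2$ via $z^{(2)}$), but $r_1(1,-2)+r_2(-2,1)=(r_1-2r_2,\ r_2-2r_1)$ would force $r_1>2r_2$ and $r_2>2r_1$ simultaneously, which is impossible for $r_1,r_2\ge 0$. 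Your ``scale each coordinate's witness and sum'' idea fails precisely because boosting the witness for one coordinate can drive another coordinate arbitrarily negative. Correspondingly, in this example there is no single variable $X_k$ bounded on both tentacles; the bounded nonconstant polynomial is $X_1X_2$, which has $z^{(j)}$-degree $-1$ for both $j$.

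The paper resolves this direction differently. It invokes a Theorem of the Alternative (a Farkas/Gordan-type result, citing \cite{ad}) to conclude: if no $r_i\in\N$ with $\sum_j r_j z^{(j)}\succ 0$ exist, then there is some $\de\in\N^n\setminus\{0\}$ with $z^{(j)}\cdot\de\le 0$ for all $j$. The monomial $\underline{X}^{\de}$ then has nonpositive degree in every $z^{(j)}$-grading and is therefore bounded on each tentacle, hence on their union. This $\de$ is the correct dual certificate; the per-coordinate condition you proposed is too weak to serve as the alternative. You correctly flagged this step as the main obstacle, but the resolution needs the linear-programming duality, not a coordinate-by-coordinate argument.
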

\begin{proof}The first part of the theorem is clear from the above
results. We only have to prove the part concerning the bounded
polynomial functions. Note that a polynomial $f$ is bounded on a
tentacle $T_{K,z}$ if and only if it has degree less or equal to
$0$ with respect to the $z$-grading. This follows easily, using
the ideas from the proof of Proposition \ref{xxx}, and the fact
that $K$ is compact and has nonempty interior. So in case there
are natural numbers $r_{1},\ldots,r_{m}\in\N$ with
$$r_{1}z^{(1)}+\cdots+r_{m}z^{(m)} \succ 0,$$ there is no
nontrivial monomial $\underline{X}^{\de}$ that has degree less or
equal to $0$ with respect to all the $z^{(j)}$-gradings. As all
the monomials are homogeneous elements, there can be no nontrivial
polynomial bounded on $ \bigcup_{j=1}^m T_{K_j,z^{(j)}}.$

Conversely, assume there do \textit{not} exists suitable numbers
$r_i$. Then, by a Theorem of the Alternative (see for example
\cite{ad}, Lemma 1.2), there must be $\de\in\N^n\setminus\{0\}$,
such that
$$\de\cdot z^{(j)}\leq 0$$ for all $j$. But this means that the
(nontrivial) monomial $\underline{X}^{\de}$ is bounded on the set
$\bigcup_{j=1}^m T_{K_j,z^{(j)}}.$
\end{proof}

Another class of gradings on the polynomial algebra $A$ is given
by term-orders. A term order is  a linear ordering $\leq$ on
$\N^{n}$ which fulfills
$$\al\leq\be\Rightarrow\al+\ga\leq\be+\ga$$ for all
$\al,\be,\ga\in\N^{n}$. Such a term order extends in a canonical
way to an ordering of the Abelian group $\Z^{n}$. Indeed write
$\ga\in\Z^n$ as a difference $\al-\be$ of elements from $\N^n$;
then define $\ga\geq 0$ if and only if $\al\geq\be.$

We have a grading
$$A=\bigoplus_{\ga\in\Z^{n}}A_{\ga}^{(\leq)},$$ where
$A_{\ga}^{(\leq)}:=\R\cdot \underline{X}^{\ga}$ if $\ga\in\N^{n}$
and $A_{\ga}^{(\leq)}:= \{0\}$ otherwise. We refer to this grading
as the $\leq$-grading. The decomposition of a polynomial $f\in
\R[\underline{X}]$ is
$$f=c_{\ga_{1}}\underline{X}^{\ga_{1}}+\cdots+c_{\ga_{t}}\underline{X}^{\ga_{t}},$$ where
$c_{\ga_{i}}\neq 0$ are the coefficients of $f$ and
$\ga_{1}<\cdots<\ga_{t}$ with respect to the term order. The
degree of $f$ is $\ga_{t}$ then, and the highest degree part is
the monomial $c_{\ga_{t}}\underline{X}^{\ga_{t}}$. Now for these
term order gradings, the question of total stability is easy to
solve. First we apply the reduction result from Proposition
$\ref{modtwo}$ to the generators of the quadratic module. So we
can assume that all the generators have the same degree mod
$2\Z^{n}$. The highest degree parts of the generators are then
monomials $c_{\ga} \underline{X}^{\ga}$, where all the $\ga$ are
congruent modulo $2\Z^{n}$. So obviously the quadratic module is
totally stable if and only if all the occurring coefficients
$c_{\ga}$ have the same sign, and are positive in case the $\ga$
are congruent $0$ modulo $2\Z^{n}$. This gives an easy to apply
method to decide total stability of a quadratic module with
respect to a term order grading.

Note that not all of these $\leq$-gradings induce filtrations with
finite dimensional linear subspaces. For example, a
lexicographical ordering on $\N^{n}$ does not. However, if we
first sort by the usual total degree and then lexicographically,
the subspaces are finite dimensional.

These term order gradings can show stability of quadratic modules,
where the purely geometric conditions derived above and in
\cite{ps} do not apply. So they allow to take into account the
difference between quadratic modules and preorderings.

\section{Examples}\label{examples}

\label{stabex}We start with some examples for the geometric
stability result of Theorem \ref{all}. The first set we look at is
defined by the inequalities $0\leq x,x^2\leq y, y\leq 2x^2$ in
$\R^2$.

 \begin{figure}[h!]
\begin{center}\bf\includegraphics[scale=0.2]{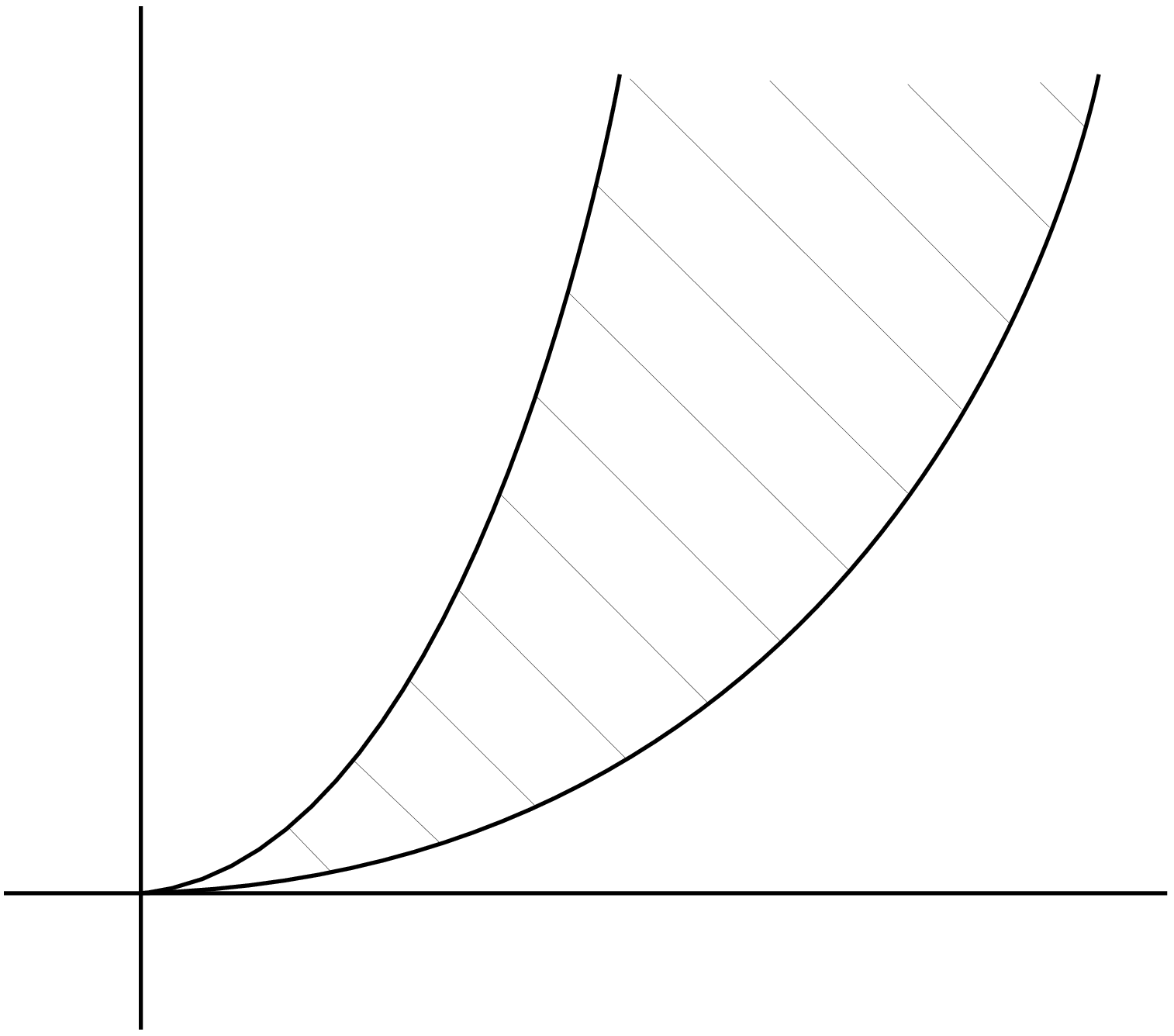}\end{center}
\end{figure}

It contains a tentacle $T_{K,(1,2)}$. Therefore every finitely
generated quadratic module describing this set is stable, thus
also closed and does not have the Strong Moment Property.

The second set is described by $0\leq x, 0\leq y, (x-1)(y-1)\leq
1.$
\begin{center}\bf\includegraphics[scale=0.25]{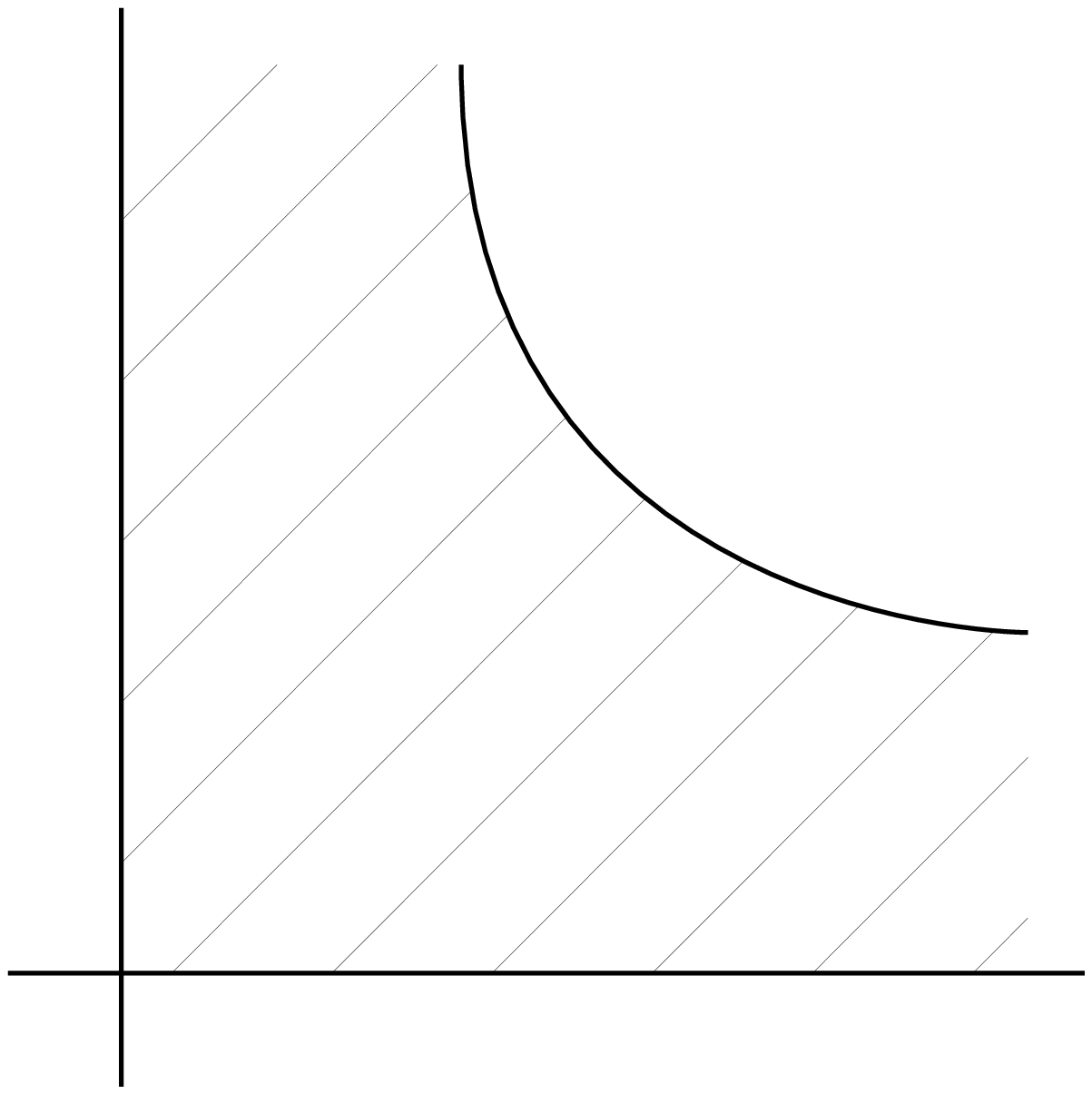}\end{center}
It contains a full dimensional cylinder in each direction of
coordinates (that is, sets $T_{K_1,(1,0)}$ and $T_{K_2,(0,1)}$),
and so every finitely generated quadratic module describing it is
stable, closed and can not have the Strong Moment Property. This
is one way to answer Open Question 4 from \cite{kms}. Another way
to solve this open question is due to Claus Scheiderer
(unpublished). One applies Theorem 3.10 from \cite{ps}.

We can weaken the geometric situation and still obtain stability.
Look at the inequalities $0\leq x, 0\leq y, (x-1)y\leq 1.$
\begin{center}\bf\includegraphics[scale=0.2]{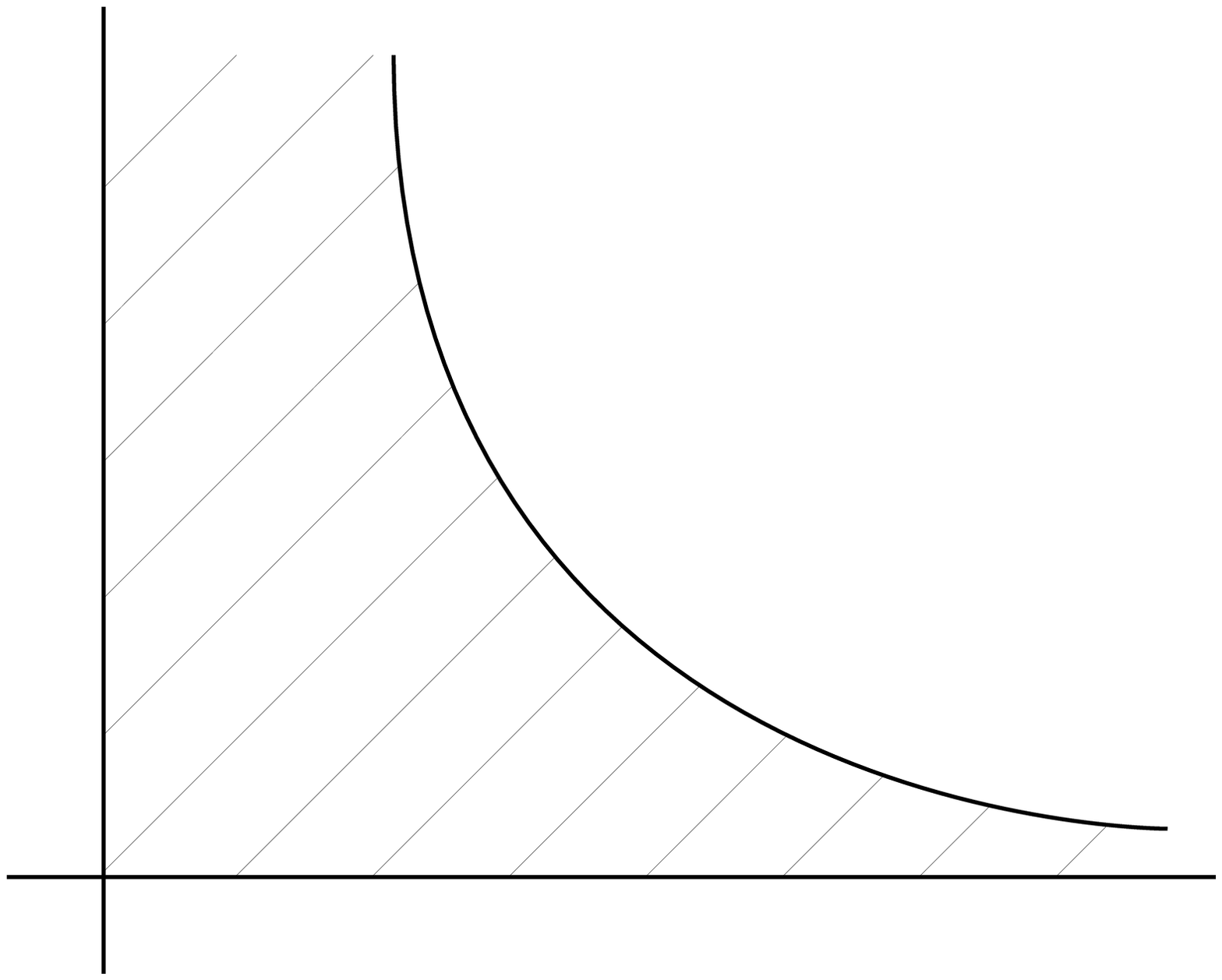}\end{center}
This set contains a full dimensional cylinder in direction of $y$
(a set $T_{K_1,(0,1)}$) and a set $T_{K_2,(1,-1)}$. The $(0,1)$-
and the $(1,-1)$-gradings cover the usual grading, by Proposition
\ref{qcover} (or the fact that there are no nontrivial bounded
polynomials; see Theorem \ref{all}). So every finitely generated
quadratic module describing this set is stable, therefore also
closed and can not have the Strong Moment Property.

We can still go one step further in narrowing the tentacles going
to infinity. Look at the semi-algebraic set defined by $0\leq x,
x^2y\leq 1, -1\leq xy.$
\begin{center}\bf\includegraphics[scale=0.25]{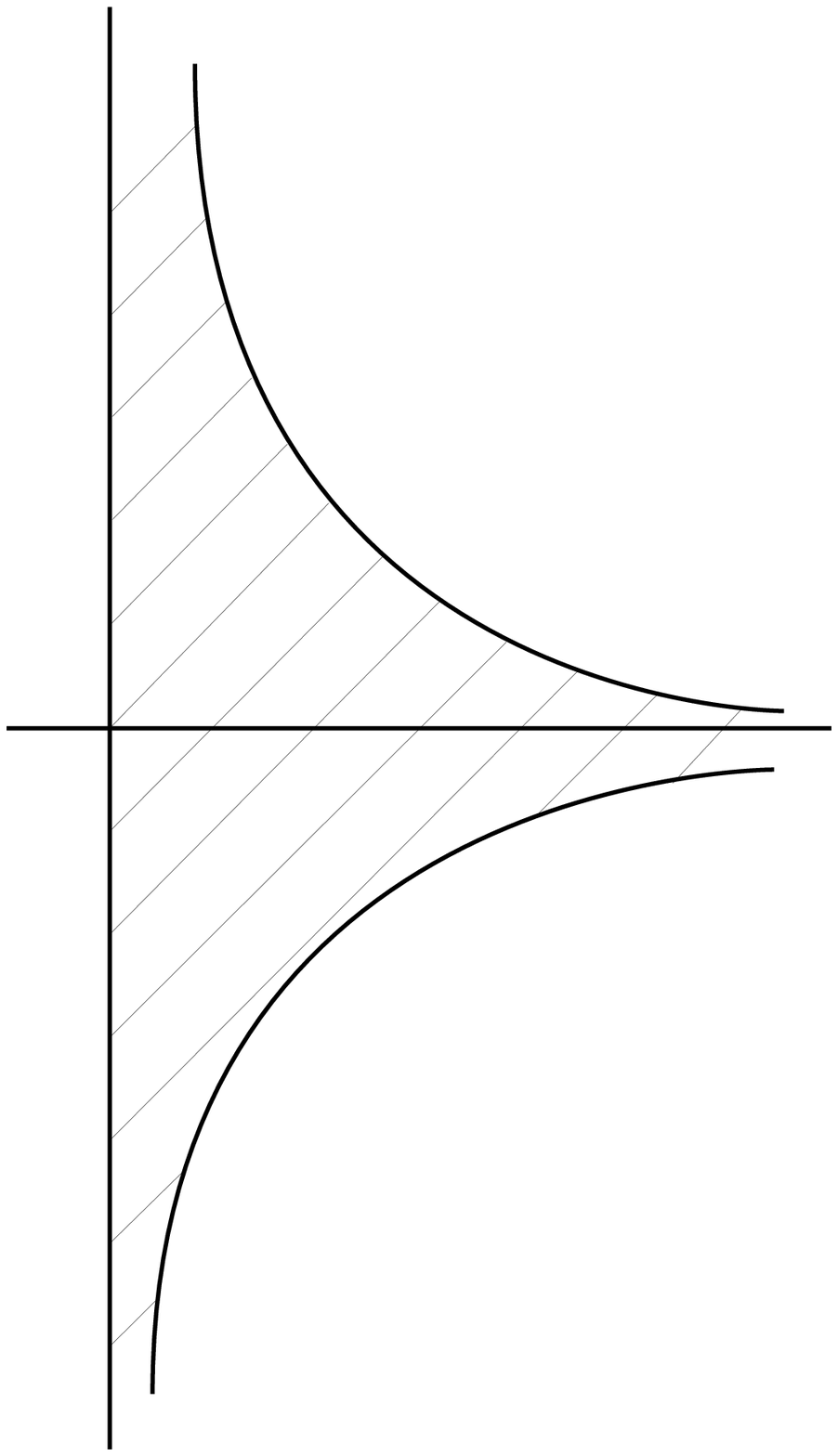}\end{center}
It contains a set $T_{K_1,(-1,2)}$ (corresponding to the tentacle
going to infinity in positive direction of $y$), and a set
$T_{K_2,(1,-1)}$ (corresponding to the part of the tentacle going
to infinity in direction of $x$ that lies below the $x$-axis). As
$$2\cdot (-1,2) + 3\cdot(1,-1) = (1, 1)$$ is positive in
each coordinate,  every finitely generated quadratic module
describing this set is stable, and therefore also closed and does
not have the Strong Moment Property. The considerations also show
that there are no nontrivial bounded polynomials on this set,
which is not completely obvious in this case.

We conclude the section with two non-geometric stability results.
First, look at the semi-algebraic set defined by $0\leq x,0\leq x,
xy\leq 1.$
\begin{center}\bf\includegraphics[scale=0.27]{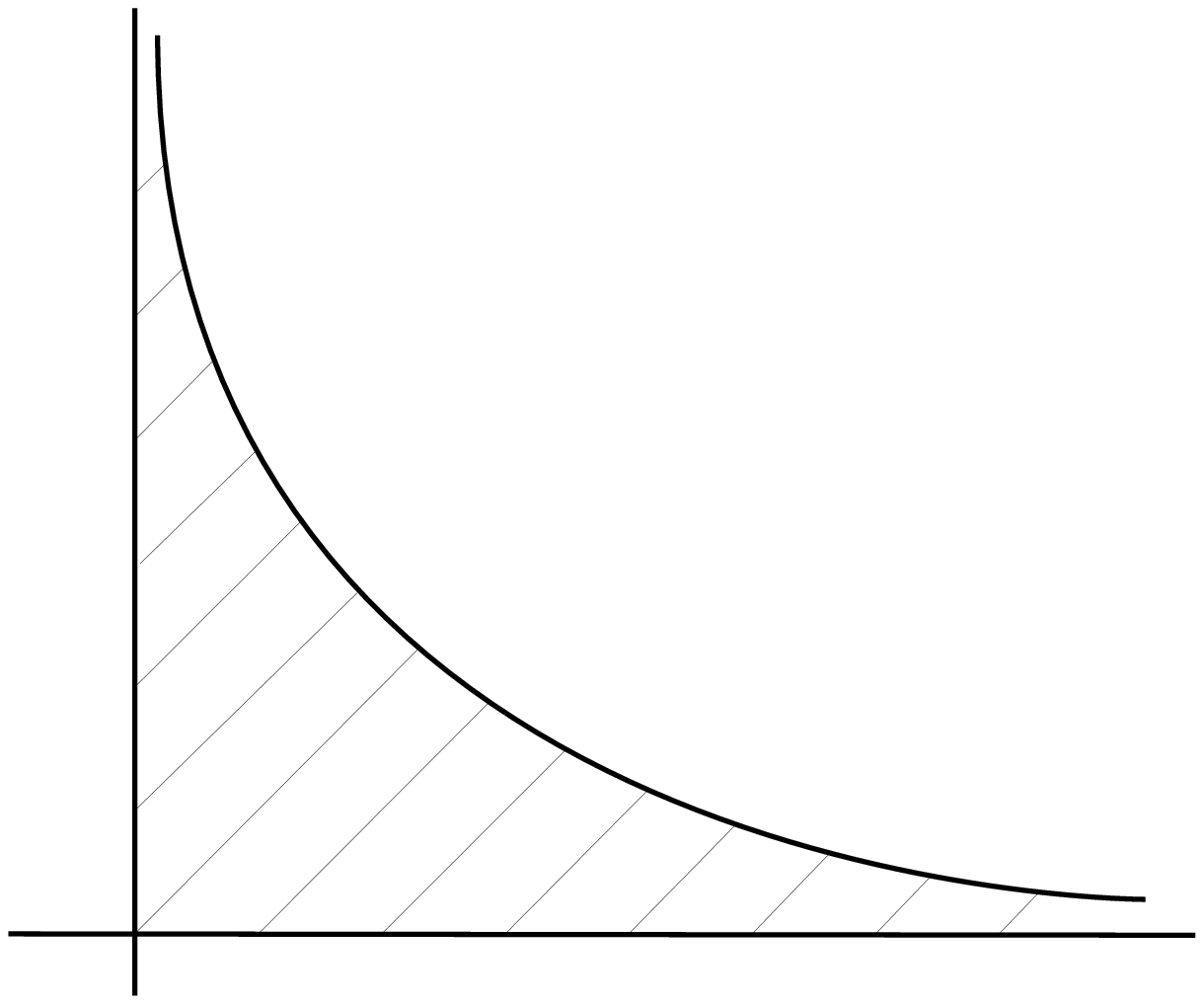}\end{center}
The geometric tentacle result does not apply to this set, and for
example the preordering generated by $X,Y,1-XY$ indeed has the
Strong Moment Property (see \cite{kms}, Example 8.4). So it can
not be stable. However, to the quadratic module
$M_1=\QM(X,Y,1-XY)$
 we can apply the above explained results.
Take the monomial ordering that first sorts by the usual total
degree and then lexicographically with $X>Y.$ No two of the
generators of $M_1$ have the same degree modulo
$2\cdot(\Z\oplus\Z)$. So $M_1$ is stable, closed and does not have
the Strong Moment Property.

Exactly the same argument shows that the quadratic module
$M_2=\QM(X-\frac12, Y-\frac12, 1-XY)$ is stable. In contrast to
$M_1$, it describes a compact set:
\begin{center}\bf\includegraphics[scale=0.35]{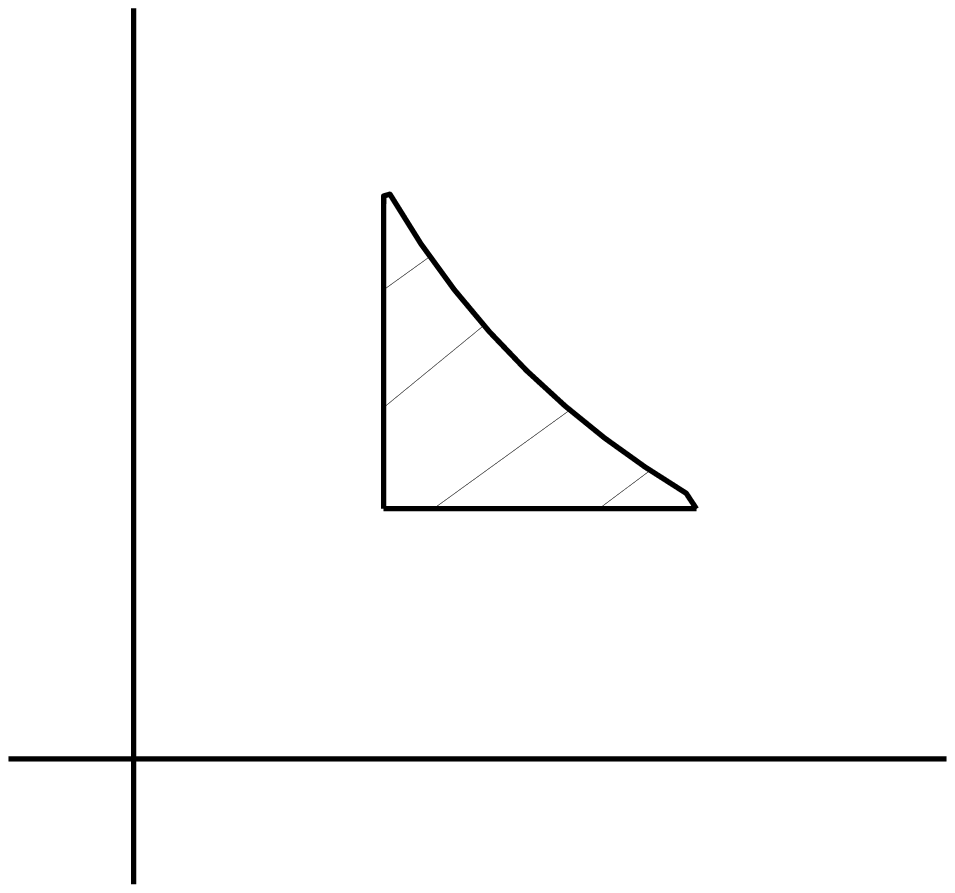}\end{center}
This quadratic module is Example 6.3.1 from \cite{pd}, for a
non-archimedean quadratic module describing a compact set. We can
see here that $M_2$ is not only non-archimedean, but indeed  does
not have the Strong Moment Property, which is stronger.

\section{Strong Stability and the Invariant Moment Problem}

We conclude this section with a generalization of Theorem 6.23
from \cite{cks}. First note that the definition of filtrations and
strongly stable generators of a quadratic module with respect to a
filtration make sense in arbitrary $\R$-algebras. Of course, if
the algebra it not reduced or real, strong stability will only
occur in degenerate situations.

 If $\iota\colon B\rightarrow A$ is
homomorphism of $\R$-algebras, then a filtration on $A$ induces a
canonical one on $B$. If for some $b_1,\ldots,b_s\in B$ the
elements $\iota(b_1),\ldots,\iota(b_s)\in A$ are strongly stable
generators with respect to a given filtration on $A$, then
obviously $b_1,\ldots,b_s$ are strongly stable generators with
respect to that induced filtration.

We now briefly recall the setup of \cite{cks} and refer the reader
to it for more detailed information. Consider a finitely generated
and reduced $\R$-algebra $A$ with affine $\R$-variety $\V_A$.
Denote the set of real points by $\V_A(\R)$. Then $A$ equals
$\R[\V_A]$, the algebra of real regular functions on $\V_A$. Let
$\mathcal{G}$ be a linear algebraic group defined over $\R$,
acting on $\V_A$ by means of $\R$-morphisms. Then
$\mathcal{G}(\R)$ acts canonically on $A=\R[\V_A],$ and if
$\mathcal{G}(\R)$ is compact, the set of invariant regular
functions, denoted by $B=\R[\V_A]^{\mathcal{G}}$, is a finitely
generated $\R$-algebra. So it corresponds to an affine
$\R$-variety $\V_B$ and the inclusion $\iota\colon
B=\R[\V_A]^{\mathcal{G}}\hookrightarrow\R[\V_A]=A$ corresponds to
a morphism $\V_A\rightarrow \V_B$. The restricted morphism
$\iota^*\colon \V_A(\R)\rightarrow \V_B(\R)$ can be seen as the
orbit map of the group action, by a Theorem by Procesi, Schwarz
and Br\"oker. Indeed, the nonempty fibers are precisely the
$\mathcal{G}(\R)$-orbits. Furthermore, for any basic closed
semi-algebraic set $\Se$ in $\V_A(\R)$, the set $\iota^*(\Se)$ is
basic closed semi-algebraic in $\V_B(\R)$. The affine variety
$\V_B$ is denoted by $\V_A / / \mathcal{G}$.

Now suppose $\Se\subseteq \V_A(\R)$ is $\mathcal{G}$-invariant.
Then one can look at the \textit{Invariant Moment Problem} for
$\Se$. That is, one wants to find a finitely generated quadratic
module $M\subseteq \R[\V_A]^{\mathcal{G}}$, such that every linear
functional $L$ on $A$ that is invariant under the action of
$\mathcal{G}(\R)$ and nonnegative on $M$ is integration with
respect to a measure on $\Se$. One of the main results from
\cite{cks} concerning the Invariant Moment Problem is, that this
is possible if and only if $M$ defines $\iota^*(\Se)$ in
$\V_B(\R)$ and has the Strong Moment Property in $B$ (Lemma 6.9 in
\cite{cks}). The situation in $\V_B(\R)$ is often simpler than the
one in $\V_A(\R)$, and so the Invariant Moment Problem can be
solved in cases where the Strong Moment Problem can not.

However, Theorem 6.23 in \cite{cks} yields a negative result about
the Invariant Moment Problem. Roughly spoken, it says that if the
Moment Problem for $\Se$ is not solvable due to some geometric
conditions on $\Se$, then the Invariant Moment Problem is not
solvable either. The result is proven for finite groups
$\mathcal{G}$ and irreducible varieties only. The following result
holds for arbitrary compact groups.

\begin{theorem}\label{inva}
Let the compact group $\mathcal{G}$ act on the affine variety
$\V_A$ and let $\Se$ be a $\mathcal{G}(\R)$-invariant basic closed
semi-algebraic set in $\V_A(\R)$. Fix a filtration of finite
dimensional subspaces of $A,$ and assume that  every finitely
generated quadratic module in $A$ describing $\Se$ has only
strongly stable generators with respect to that filtration. Then
every finitely generated quadratic module in
$B=\R[\V_A]^{\mathcal{G}}$ describing $\iota^*(\Se)$ has only
strongly stable generators with respect to the induced filtration
on $B$ (which consists of finite dimensional subspaces as well).

In particular, if $\dim(\iota^*(\Se))\geq 2$, then no finitely
generated quadratic module in $B$ describing $\iota^*(\Se)$ can
have the Strong Moment Problem. So the Invariant Moment Problem is
not finitely solvable for $\Se$.
\end{theorem}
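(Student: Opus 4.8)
The plan is to reduce the statement about quadratic modules in $B$ to the hypothesis about quadratic modules in $A$, using the pullback along $\io\colon B\hookrightarrow A$. First I would take an arbitrary finitely generated quadratic module $N=\QM(b_1,\ldots,b_s)$ in $B$ describing $\io^*(\Se)$, and form its pushforward $M:=\QM(\io(b_1),\ldots,\io(b_s))$ in $A$. The key geometric point is that $M$ describes $\Se$: since $\Se$ is $\mathcal{G}(\R)$-invariant and $\io^*$ is (on real points) the orbit map whose nonempty fibers are exactly the $\mathcal{G}(\R)$-orbits, a point $x\in\V_A(\R)$ lies in $\Se$ if and only if its image $\io^*(x)$ lies in $\io^*(\Se)$; and $b_j(\io^*(x))=\io(b_j)(x)$, so $\Se(M)=(\io^*)^{-1}(\io^*(\Se))=\Se$. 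Here I would need the fact (stated in the recalled setup from \cite{cks}) that $\io^*(\Se)$ is basic closed semi-algebraic and that the fibers of $\io^*$ over $\io^*(\Se)$ meet $\Se$, i.e. $\Se$ is a union of fibers, which is precisely $\mathcal{G}(\R)$-invariance.

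By hypothesis, then, $\io(b_1),\ldots,\io(b_s)$ are strongly stable generators of $M$ with respect to the fixed finite-dimensional filtration $\{U_\ga\}$ on $A$. The induced filtration on $B$ is $\{U_\ga\cap B\}$; since each $U_\ga$ is finite dimensional, so is each $U_\ga\cap B$, and $\bigcup_\ga(U_\ga\cap B)=B$ because $\io(B)\subseteq A=\bigcup_\ga U_\ga$ and $B$ itself is finitely generated. As observed in the paragraph just before the theorem, strong stability passes to subalgebras along algebra homomorphisms: if $\sum_{i=0}^s\si_i b_i\in U_\ga\cap B$ with $\si_i=\sum_j g_{i,j}^2$, $g_{i,j}\in B$, then applying $\io$ gives $\sum_i\io(\si_i)\io(b_i)\in U_\ga$, so by strong stability of the $\io(b_i)$ in $A$ we get $\io(g_{i,j})\in U_{\rh(\ga)}$, hence $g_{i,j}\in U_{\rh(\ga)}\cap B$. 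Thus $b_1,\ldots,b_s$ are strongly stable generators of $N$ with respect to the induced filtration, with the same stability map $\rh$. By Lemma \ref{unab} this does not depend on the chosen generators, so every finitely generated quadratic module in $B$ describing $\io^*(\Se)$ has only strongly stable generators.

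For the final assertion: strongly stable generators are stable generators (noted after Definition \ref{stabdef}), and stability with respect to a filtration of finite-dimensional subspaces is exactly stability in the sense of Definition \ref{staborig}. Hence every such $N$ is stable as a quadratic module in $B$. Since $\dim\io^*(\Se)\geq 2$ by assumption, Scheiderer's theorem (the second cited theorem after Definition \ref{staborig}) shows $N$ does not have the Strong Moment Property in $B$. By the characterization recalled from \cite{cks} (Lemma 6.9 there), solving the Invariant Moment Problem for $\Se$ with a finitely generated quadratic module requires a finitely generated $N\subseteq B$ describing $\io^*(\Se)$ and having the Strong Moment Property in $B$; no such $N$ exists, so the Invariant Moment Problem is not finitely solvable for $\Se$.

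The main obstacle I expect is verifying cleanly that $M$ describes $\Se$ in $\V_A(\R)$ — i.e. that $\Se(M)=\Se$ rather than merely $\Se(M)\supseteq\Se$. This is where $\mathcal{G}(\R)$-invariance of $\Se$ is essential: without it, the pullback of $\io^*(\Se)$ could be strictly larger than $\Se$, and then the hypothesis applied to $M$ would give information about the wrong set. The compactness of $\mathcal{G}$ enters only to guarantee that $B=\R[\V_A]^{\mathcal{G}}$ is finitely generated and that $\io^*(\Se)$ is again basic closed semi-algebraic, both of which are imported from the setup of \cite{cks}; the rest of the argument is the formal transfer of strong stability along $\io$, which is routine.
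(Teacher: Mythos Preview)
Your proof is correct and follows essentially the same approach as the paper's own argument: push the generators forward along $\io$, use $\mathcal{G}(\R)$-invariance of $\Se$ together with the orbit-map property of $\io^*$ to see that the resulting quadratic module describes $\Se$, apply the hypothesis, and pull strong stability back to $B$ before invoking Scheiderer's theorem. One minor remark: your appeal to Lemma~\ref{unab} is unnecessary (and that lemma is stated only for gradings), since you already began with an arbitrary choice of generators $b_1,\ldots,b_s$, so independence of the generators is built into your argument from the start.
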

\begin{proof}
If $\QM(f_1,\ldots,f_s)\subseteq B$ describes $\iota^*(\Se)$, then
$$\QM(\iota(f_1),\ldots,\iota(f_s))\subseteq A$$ describes $\Se =
(\iota^*)^{-1}(\iota^*(\Se))$. This uses that the fibres of
$\iota^*$ are precisely the $\mathcal{G}(\R)$-orbits and that
$\Se$ is $\mathcal{G}(\R)$-invariant. So the assumption implies
that $\iota(f_1),\ldots,\iota(f_s)$ are strongly stable generators
in $A$, and so are $f_1,\ldots,f_s$ in $B$. The result concerning
the Moment Problem follows from \cite{s} now.
\end{proof}
One checks that the geometric conditions from Theorem 6.24 in
\cite{cks} imply, that the conditions from our Theorem \ref{inva}
are fulfilled. Note also that the geometric conditions obtained in
Theorem \ref{all} above  always imply the strong stability of any
finite set of generators for $\Se$. So Theorem \ref{inva} yields a
negative result concerning the Invariant Moment Problem in all of
these cases.

\end{document}